\documentclass[11pt,a4paper]{article}

\usepackage[utf8]{inputenc}
\usepackage[T1]{fontenc}
\usepackage[english]{babel}
\usepackage{amssymb,amsmath,amsthm}
\usepackage{mathrsfs}
\usepackage{xspace}
\usepackage{url}
\usepackage{tikz}
\usepackage[margin=2.8cm]{geometry}

\theoremstyle{plain}
\newtheorem{theorem}{Theorem}[section]

\newtheorem{lemma}[theorem]{Lemma}

\theoremstyle{definition}
\newtheorem{remark}[theorem]{Remark}

\newcommand{\ds}{\displaystyle}

\newcommand{\opA}{ \mathcal{A} }

\newcommand{\opC}{ \mathcal{C} }
\newcommand{\opQ}{ \mathcal{Q} }
\newcommand{\opS}{ \mathcal{S} }
\newcommand{\opPl}{ {\mathcal{P}}_{\lambda_l} }
\newcommand{\proj}{ {\mathcal{P}}_{\mu_j} }

\newcommand{\pt}{ {\partial}_{t} }
\newcommand{\dn}{ {\partial}_{n} }
\newcommand{\dive}[1]{ \mathrm{div} \, #1 }
\newcommand{\dom}[1]{ \mathcal{D}\left(#1\right) }
\newcommand{\spec}[1]{ \sigma\left(#1\right) }
\newcommand{\res}[2]{\mathcal{R} \left(#1;#2\right)}

\newcommand{\Deltavec}{ \overrightarrow{\Delta} }

\newcommand{\conj}[1]{ \overline{#1} }
\newcommand{\re}[1]{ \mathfrak{Re} \, #1 }

\newcommand{\norm}[2]{{\left\|#1\right\|}_{#2}}
\newcommand{\ps}[3]{ {\left\langle #1 , #2 \right\rangle}_{#3} }
\newcommand{\abs}[1]{ \left|#1\right| }

\newcommand{\image}[1]{\mathrm{Im} \, #1}
\newcommand{\rank}{ \mathrm{rank} \,}
\newcommand{\vect}{ \mathrm{span} }

\newcommand{\field}[1]{\ensuremath{\mathbb{#1}}}
\newcommand{\C}{\field{C}\xspace}
\newcommand{\R}{\field{R}\xspace}
\newcommand{\ens}[1]{ \left\{#1\right\} }
\renewcommand{\sin}[1]{ \, \mathrm{sin}\left(#1\right)}

\title{Boundary approximate controllability of some linear parabolic systems}

\author{Guillaume Olive\thanks{LATP, UMR 7353, Aix-Marseille universit\'e,
Technop\^ole Ch\^ateau-Gombert, 39, rue F. Joliot-Curie,
13453 Marseille cedex 13, France.}}

\date{}

\begin{document}

\maketitle

\begin{abstract}
This paper focuses on the boundary approximate controllability of two classes
of linear parabolic systems, namely a system of $n$ heat equations coupled
through constant terms and a $2 \times 2$ cascade system coupled by means of a
first order partial differential operator with space-dependent coefficients.
For each system we prove a sufficient condition in any space dimension and we
show that this condition turns out to be also necessary in one dimension with
only one control. For the system of coupled heat equations we also study the
problem on rectangle, and we give characterizations depending on the position
of the control domain. Finally, we exhibit a cascade system for which the
distributed controllability holds whereas the boundary controllability does not.
The method relies on a general characterization due to H.O. Fattorini.
\end{abstract}

\noindent\textbf{2010 Mathematics Subject Classification:} 93B05, 93C05, 35K05.

\noindent\textbf{Key words and phrases:}
Parabolic systems, boundary controllability, distributed controllability,
Hautus test.

\medskip
\section{Introduction}

The controllability of parabolic systems is a difficult problem.
While Carleman estimates have been successfully used to prove the distributed null-controllability of some linear parabolic systems (e.g. \cite{AKBDGB}, \cite{GBdT}, \cite{Gue}, \cite{Mau}, \cite{BCGdT}), there are still many cases where these estimates appear to be of no help.
An example of such situation is when the control domain and the coupling domain do not meet each other (\cite{ABL}, \cite{RdT}).
The boundary controllability is another of these situations and requires new techniques to be solved.
In \cite{FCGBdT} and \cite{AKBGBdT}, the authors developped the method of moments of H.O. Fattorini and D.L. Russell to establish a characterization of the boundary null-controllability in dimension 1 for a system of $n$ coupled heat equations.
In \cite{ABL}, the authors used transmutation techniques to obtain a boundary null-controllability result in any dimension for a system of $2$ heat equations, with a particular coupling.
Finally, in \cite{KdT} the authors proved the boundary approximate controllability of a cascade system of $2$ heat equations in any dimension by developping the solution into Fourier series.
To the author knowledge, these results are the only ones concerning the boundary controllability of linear parabolic systems of heat-type.
For more details, a good account on actual methods and recent open problems for the distributed or boundary controllability of linear parabolic systems we refer to the survey \cite{AKBGBdTsurvey}.

In the present work we are interested in the boundary approximate controllability of two classes of linear parabolic systems introduced in \cite{FCGBdT} and \cite{KdT}.
More precisely, the first system we study is the following
\footnote{$\Deltavec$ denotes the vectorial Laplacian, in constrast with $\Delta$ for the scalar Laplacian.}
\begin{equation}\label{syst 1}
\left\{\begin{array}{rcll}
\ds \pt y &=&  \Deltavec y+Ay&\mbox{ in } (0,T) \times \Omega. \\
y&=& 1_{\gamma}Bg &\mbox{ on } (0,T) \times \partial \Omega. \\
y(0)&=& y_0 & \mbox{ in } \Omega.
\end{array}\right.
\end{equation}
where $T>0$, $\Omega$ is a bounded open subset of $\R^N$, assumed regular enough, $y$ is the state, $y_0$ is the initial data, $A$ and $B$ are $n \times n$ and $n \times m$ constant matrices with complex coefficients, $g$ is the control, to be searched in $L^2(0,T;L^2(\partial\Omega)^m)$ - so that in fact we have $m$ controls - and $\gamma \subset \partial\Omega$ is the control domain.

First of all, let us recall some basic facts about this kind of systems and their controllability properties:
\begin{enumerate}
\item
System (\ref{syst 1}) is well-posed in the following sense: for every $y_0 \in H^{-1}(\Omega)^n$ and $g \in L^2(0,T;L^2(\partial\Omega)^m)$, there exists a unique solution defined by transposition $y \in C^0([0,T];H^{-1}(\Omega)^n) \cap L^2(0,T;L^2(\Omega)^n)$ that depends continuously on the initial data $y_0$ and the control $g$.
\item
System (\ref{syst 1}) is said to be approximately controllable at time $T$ if for every $y_0,y_1 \in H^{-1}(\Omega)^n$ and every $\epsilon>0$, there exists a control $g \in L^2(0,T;L^2(\partial\Omega)^m)$ such that the corresponding solution $y$ satisfies
$$\norm{y(T)-y_1}{H^{-1}(\Omega)^n} \leq \epsilon.$$
We say that system (\ref{syst 1}) is approximately controllable if it is approximately controllable at time $T$ for every $T>0$.
\item
It is nowadays well-known that the controllability has a dual concept called observability and that they are linked by the following result: system (\ref{syst 1}) is approximately controllable at time $T$ if and only if its adjoint system
\footnote{Since the data are more regular and the system is autonomous, the solution can be taken in the sense of semigroups: $z(t)=\opS(t)z_0$, where $\opS(t)$ is the semigroup generated on $L^2(\Omega)^n$ by the operator $\Deltavec+A^*$ with domain $H^2(\Omega)^n \cap H^1_0(\Omega)^n$.
Let us recall that, for $z_0 \in H^1_0(\Omega)^n$, we have $z \in C^0([0,T];H^1_0(\Omega)^n) \cap L^2(0,T;H^2(\Omega)^n \cap H^1_0(\Omega)^n)$.}
$$
\left\{\begin{array}{rcll}
\ds \pt z &=&  \Deltavec z+A^*z&\mbox{ in } (0,T) \times \Omega. \\
z&=& 0 &\mbox{ on } (0,T) \times \partial \Omega. \\
z(0)&=& z_0 & \mbox{ in } \Omega.
\end{array}\right.
$$
is approximately observable at time $T$, that is it verifies the following unique continuation property
\begin{equation}\label{ucp nat}
\forall z_0 \in H^1_0(\Omega)^n, \quad
\bigg( B^*1_{\gamma}\dn z(t)=0 \mbox{ for a.e. } t \in (0,T) \bigg)
\Longrightarrow z_0=0.
\end{equation}
\end{enumerate}

The boundary controllability problem for (\ref{syst 1}) has been introduced in \cite{FCGBdT}.
In this paper, the authors proved a necessary and sufficient condition for this system to be null-controllable, and the same condition also characterizes the approximate controllability, see \cite[Theorem 1.1]{FCGBdT} and \cite[Theorem 5.2]{FCGBdT}.
We point out that this work has been done in 1D and for $2$ equations.
A generalization to the case of $n$ equations can be found in \cite{AKBGBdT}, still in 1D.
To the author knowledge, the only result that can be applied to system (\ref{syst 1}) in any dimension is \cite[Corollary 2.2]{ABL}, but the matrix $A$ has to have a very particular structure and it requires a geometric condition on $\gamma$.

In this paper we will provide conditions for the approximate controllability of this system in several interesting particular cases, see the sections \ref{sect cond suffi syst1} to \ref{sect appli rect} below.
Some results are already known but we give new and simpler proofs.

The second system we deal with is the following
\begin{equation}\label{syst 2}
\left\{\begin{array}{ll}
\ds \pt y_1 =  \Delta y_1 &\mbox{ in } (0,T) \times \Omega. \\
\ds \pt y_2 =  \Delta y_2 + G(x) \cdot \nabla y_1 + a(x) y_1&\mbox{ in } (0,T) \times \Omega. \\
y_1 = 1_{\gamma}g, \quad y_2=0 &\mbox{ on } (0,T) \times \partial\Omega. \\
y_1(0)= y_{1,0}, \quad y_2(0)=y_{2,0} & \mbox{ in } \Omega.
\end{array}\right.
\end{equation}
where $G \in W^{1,\infty}(\Omega)^N$, $a \in L^{\infty}(\Omega)$, and $g$ is still the control, but this time we only have one control: $g \in L^2(0,T;L^2(\partial\Omega))$.

The interest in the controllability of such systems started with \cite{KdT}.
In this paper the authors gave sufficient conditions for the approximate controllability.

In the present work we bring a new point of view to treat this problem.
This allows us to recover the result of \cite{KdT} and also to provide a necessary and sufficient condition in the 1D case.

The main tool to achieve our goals will be the use of a theorem of H.O. Fattorini.
In fact, in 1966, H.O. Fattorini gave an interesting characterization of the approximate controllability under a general abstract framework.
In his paper \cite{Fat} he proved that, under some reasonable assumptions, the only observation of the eigenfunctions completely characterizes the approximate controllability.
Actually, this theorem has been proved for bounded observation operators but it can easily be generalized to the case of relatively bounded observation operators as follows:

\begin{theorem}\label{fat}\label{FAT}
Let $H$ and $U$ be some complex Hilbert spaces.
Assume that $\opA : \dom{\opA} \subset H \longrightarrow H$ generates a strongly continuous semigroup $\opS(t)$ on $H$, has a compact resolvent, and the system of root vectors of its adjoint $\opA^*$ is complete in $H$.
Let $\opC: \dom{\opC} \subset H \longrightarrow U$ be relatively bounded with respect to $\opA$.
Then, we have the property
\begin{equation}\label{ucp fat}
\forall z_0 \in \dom{\opA}, \quad
\bigg( \opC \opS(t)z_0=0 \mbox{ for a.e. } t \in (0,+\infty) \bigg)
\Longrightarrow z_0=0,
\end{equation}
if and only if
$$\ker(s-\opA) \cap \ker \opC=\ens{0}, \quad \forall s \in \C.$$
\end{theorem}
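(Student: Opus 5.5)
The necessity of the eigenvector condition is immediate, and the converse goes through duality and the resolvent. If $z_0 \in \ker(s-\opA)\cap\ker\opC$, then $z_0\in\dom{\opA}$ and $\opS(t)z_0=e^{st}z_0$. Hence $\opC\opS(t)z_0=e^{st}\opC z_0=0$ for all $t$, and (\ref{ucp fat}) forces $z_0=0$.

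For the converse, assume $\ker(s-\opA)\cap\ker\opC=\ens{0}$ for every $s$. Take $z_0\in\dom{\opA}$ with $\opC\opS(t)z_0=0$ for a.e. $t>0$. Since $\opS(t)z_0\in\dom{\opA}$ and $\opC$ is relatively bounded, $t\mapsto\opC\opS(t)z_0$ is continuous into $U$, so it vanishes identically. The plan is to Laplace transform. For $\re{\lambda}>\omega$, with $\omega$ the growth bound, we have $\int_0^\infty e^{-\lambda t}\opS(t)z_0\,dt=\res{\lambda}{\opA}z_0$, and this integral converges in the graph norm of $\opA$ because $\opA\opS(t)z_0=\opS(t)\opA z_0$. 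Since $\opC$ is $\opA$-bounded, it commutes with the integral, giving $\opC\res{\lambda}{\opA}z_0=0$ for $\re{\lambda}>\omega$. The map $\lambda\mapsto\opC\res{\lambda}{\opA}z_0=\opC(\lambda_0-\opA)^{-1}\,(\lambda_0-\opA)\res{\lambda}{\opA}z_0$ is analytic on the resolvent set $\rho(\opA)$, because $\opC(\lambda_0-\opA)^{-1}$ is bounded. The resolvent is compact, so the spectrum is discrete and $\rho(\opA)$ is connected. By the identity theorem, $\opC\res{\lambda}{\opA}z_0=0$ on all of $\rho(\opA)$.

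Next I analyse the poles. Near an eigenvalue $\mu$, the resolvent has a Laurent expansion
$$\res{\lambda}{\opA}=\sum_{k\ge0}(\lambda-\mu)^{k}R_k+\sum_{k=1}^{m}(\lambda-\mu)^{-k}(\opA-\mu)^{k-1}P_\mu,$$
where $P_\mu$ is the Riesz projection, of finite rank, and $(\opA-\mu)^m P_\mu=0$. Write $\Phi_\mu=P_\mu z_0$, which lies in the finite-dimensional root space. Vanishing of $\opC\res{\lambda}{\opA}z_0$ forces each singular coefficient to vanish: $\opC(\opA-\mu)^{k-1}\Phi_\mu=0$ for $k=1,\dots,m$. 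Here $\opC$ is applied termwise, which is legitimate because the root space is a finite-dimensional subspace of $\dom{\opA}$ and the regular part is analytic in graph norm. Let $k_0$ be the largest index with $v=(\opA-\mu)^{k_0-1}\Phi_\mu\neq0$. Then $v\in\ker(\mu-\opA)\cap\ker\opC$, which is $\ens{0}$ by hypothesis, a contradiction. Hence $P_\mu z_0=0$ for every eigenvalue $\mu$.

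Finally completeness gives $z_0=0$. Let $\psi$ be a root vector of $\opA^*$ for $\conj{\mu}$. The Riesz projection of $\opA^*$ at $\conj{\mu}$ is $P_\mu^*$, so $\psi=P_\mu^*\psi$. Therefore $\ps{z_0}{\psi}{H}=\ps{P_\mu z_0}{\psi}{H}=0$. Since the root vectors of $\opA^*$ are complete in $H$, $z_0=0$.

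The main obstacle is the pole analysis. One has to justify that the principal part of the Laurent expansion can be hit by the unbounded $\opC$ term by term. I would settle this by writing $\opC\res{\lambda}{\opA}=\bigl[\opC(\lambda_0-\opA)^{-1}\bigr](\lambda_0-\opA)\res{\lambda}{\opA}$ and using that $(\lambda_0-\opA)$ maps the principal part, a finite-rank operator into $\dom{\opA}$, boundedly.
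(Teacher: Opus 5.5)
Your proof is correct. Its outer frame matches the paper's: the Laplace transform taken in the graph norm plus analytic continuation over the connected set $\rho(\opA)$ (Lemma~\ref{lem1}), then $P_\mu z_0=0$ for every $\mu$, then $\ps{z_0}{\psi}{H}=\ps{P_\mu z_0}{\psi}{H}=0$ for root vectors $\psi$ of $\opA^*$, and finally completeness. The real difference is the middle step, going from $\opC\res{\lambda}{\opA}z_0\equiv 0$ to $\opC(\opA-\mu)^{k-1}P_\mu z_0=0$ for all $k$. The paper uses only the resolvent identity and Cauchy's theorem on $C_j$ (Lemma~\ref{lem2}) to show that $\opC\res{\lambda}{\opA}P_\mu z_0$ also vanishes on $\rho(\opA)$. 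It then returns to the time domain via Lemma~\ref{lem1} and reads the coefficients off the explicit formula $\opS(t)z_0=e^{\mu t}\sum_{\sigma}\frac{(-t)^{\sigma}}{\sigma!}(\mu-\opA)^{\sigma}z_0$ on the root subspace. This organizes its proof as the chain eigenspaces $\Rightarrow$ root subspaces $\Rightarrow$ $\dom{\opA}$, with observability on each root subspace as a stand-alone intermediate result. You instead read the same coefficients directly as the principal part of the Laurent expansion of $\opC\res{\lambda}{\opA}z_0$ at the pole $\mu$. That removes Lemma~\ref{lem2} and the round trip through the semigroup. The price is invoking the structure of the resolvent at a pole: the finite-rank principal part $\sum_{k=1}^{m}(\lambda-\mu)^{-k}(\opA-\mu)^{k-1}P_\mu$, which rests on the same fact the paper quotes from Dunford--Schwartz, namely that $\image{P_\mu}$ is the finite-dimensional root subspace. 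You also have to justify applying the unbounded $\opC$ termwise. Your factorization through the bounded operator $\opC(\lambda_0-\opA)^{-1}$ does settle this. The function $(\lambda_0-\opA)\res{\lambda}{\opA}=I+(\lambda_0-\lambda)\res{\lambda}{\opA}$ is meromorphic in $\mathcal{L}(H)$, and, using $(\opA-\mu)^mP_\mu=0$, its principal part at $\mu$ is exactly $(\lambda_0-\opA)$ applied to that of $\res{\lambda}{\opA}$. The two routes are really one computation seen from two sides. On the root subspace, $\res{\lambda}{\opA}P_\mu$ is exactly your principal part, so the conclusion of Lemma~\ref{lem2} is precisely the vanishing of that principal part. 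Likewise, your maximal-index argument with $k_0$ is the paper's downward iteration starting from $\sigma=\tau_j-1$.
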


We give a proof of this theorem (which slightly changes from the one of \cite{Fat}) in the appendix \ref{annexe 1}.

\begin{remark}\label{rem prem}
Note that the condition $\ker(s-\opA) \cap \ker \opC=\ens{0}$ can also be formulated as
$$
\forall z_0 \in \ker(s-\opA), \quad
\bigg( \opC \opS(t)z_0=0 \mbox{ for a.e. } t \in (0,+\infty) \bigg)
\Longrightarrow z_0=0.
$$
We use the first formulation because it is more eloquent, see Remark \ref{rem dim finie} below, but the most important is that Theorem \ref{fat} states that, in order to verify the property (\ref{ucp fat}) it is enough to do so only on the eigenspaces of $\opA$.
\end{remark}

\begin{remark}\label{semigpe ana}
We will see that the operators $\opA$ we consider generate an analytic semigroup.
For instance, for the first system we shall apply Theorem \ref{fat} to $\opA=\Deltavec+A^*$.
It follows from this property that $z(\cdot)=\opS(\cdot)z_0$ is analytic in time and has a regularizing effect ($\opS(t)z_0 \in \dom{\opA^{\infty}}$ as soon as $t>0$, even for $z_0 \in H$).
This allows us to replace in (\ref{ucp fat}) the interval $(0,+\infty)$ by any interval $(0,T)$, $T>0$, and to take the data $z_0$ in any space that at least contains $\dom{\opA^{\infty}}$.
This shows that (\ref{ucp nat}) and (\ref{ucp fat}) are equivalent properties.
In partictular, we see that the approximate controllability of our systems is independent of the time of control $T$.
\end{remark}

\begin{remark}\label{rem dim finie}
When $H=\mathbb{C}^n$ and $U=\mathbb{C}^m$, $\opA=A^*$ and $\opC=B^*$ (where $A$ and $B$ are still contant matrices) this theorem can be used to prove that the ordinary differential system
$$
\left\{\begin{array}{rcll}
\ds \frac{d}{dt} y &=& Ay +Bg &\mbox{ in } (0,T). \\
\ds y(0)&=& y_0 &
\end{array}\right.
$$
is controllable\footnote{In finite dimension all the notions of controllability are equivalent.} if and only if
$$\ker(s-A^*) \cap \ker B^*=\ens{0}, \quad \forall s \in \C.$$
This characterization is nowadays known as the Hautus test (despite it has been proved earlier by H.O. Fattorini).
M.L.J Hautus gave a direct proof of the equivalence with another characterization, the well-known Kalman rank condition (see \cite[Theorem 1', \S 2]{Hau})
$$\rank{(B | AB | A^2B | \cdots | A^{n-1}B)}=n.$$
\end{remark}

Finally, let us mention the recent work \cite{BT} where the authors also extended the theorem of \cite{Fat} in view of the stabilizability of some other parabolic systems.

\paragraph{Notations}
We denote by $\ens{-\lambda_l}_l$ the distinct Dirichlet eigenvalues of $\Delta$ on $\Omega$.
For each $l$, we denote by $\ens{\phi_{l,m}}_m$ an orthonormal basis in $L^2(\Omega)$ of the eigenspace of $\Delta$ associated with the eigenvalue $-\lambda_l$, and by $m_l$ the dimension of this eigenspace.
It can be verified that all the following results are independent of the choice of the basis $\ens{\phi_{l,m}}_m$.

In section \ref{result syst2}, we use the notation $\opPl$ for the orthogonal projection in $L^2(\Omega)$ on the eigenspace of $\Delta$ associated with $-\lambda_l$, that is $\opPl u= \sum_{m=1}^{m_l} \ps{u}{\phi_{l,m}}{L^2(\Omega)} \phi_{l,m}$, for $u \in L^2(\Omega)$.

In sections \ref{sect 1D}, \ref{sect B vect} and \ref{sect 1D syst2}, we consider the 1D case. In particular $m_l=1$ so that, for commodity, we simply use the notation $\phi_l$ instead of $\phi_{l,1}$.

In section \ref{sect appli rect}, we use the notation $-\lambda_l^{X_1}$ (resp. $-\lambda_l^{X_2}$) to emphasize that this is the eigenvalues corresponding to the domain $\Omega=(0,X_1)$ (resp. $\Omega=(0,X_2)$), and we denote by $\phi_l^{X_1}$ (resp. $\phi_l^{X_2}$) a corresponding eigenfunction.

\section{Results for the first system}\label{sect A}

We start by applying Theorem \ref{fat} to the operators
$$\opA=\Deltavec+A^*, \quad \dom{\opA}=H^2(\Omega)^n \cap H^1_0(\Omega)^n,$$
and
$$\opC=B^* 1_{\gamma}\dn, \quad \dom{\opC}=H^2(\Omega)^n \cap H^1_0(\Omega)^n.$$

By a perturbation argument we can check that $\opA$ generates an analytic semigroup on $L^2(\Omega)^n$, has a compact resolvent and the system of root vectors of $\opA^*$ is complete in $L^2(\Omega)^n$ (using, for instance, the Keldysh's perturbation theorem, see \cite[Theorem 4.3, Chapter I, \S 4]{Mar}), so that it satisfies the required hypothesis.
On the other hand, the operator $\opC$ is clearly relatively bounded with respect to $\opA$.

Thus, system (\ref{syst 1}) is approximately controllable (at some time or at any time, see Remark \ref{semigpe ana}) if and only if
\begin{equation}\label{HT}
\ker(s-(\Deltavec+A^*)) \cap \ker (B^*1_{\gamma}\dn)=\ens{0}, \quad \forall s \in \C.
\end{equation}

To describe the spectral elements of $\Deltavec+A^*$ we introduce the following notations:

\paragraph{Notations}
We denote by $\ens{\theta_i}_i \subset \C$ the distinct eigenvalues of the matrix $A^*$ and, for each $i$, by $\ens{w_{i,j}}_j \subset \C^{n}$ a basis of $\ker(\theta_i-A^*)$.

In view of section \ref{sect 1D}, we also denote by $m_i$ the dimension of $\ker(\theta_i-A^*)$ and we define
$$P_i=\left(\begin{array}{c} w_{i,1} | \cdots | w_{i,m_i} \end{array}\right).$$

One can check that all the following results do not depend on the choice of the basis $\ens{w_{i,j}}_j$.

These notations in mind, it is not difficult to see that the spectrum of $\Deltavec+A^*$ is
$$\spec{\Deltavec+A^*}=\ens{-\lambda_l+\theta_i}_{l,i},$$
and its eigenspaces are
$$\ker(s-(\Deltavec+A^*))=\vect \ens{w_{i,j} \phi_{l,m}}_{\substack{i,j,l,m \\ -\lambda_l+\theta_i=s }}.$$

As we can see, the spectral structure of the operator $\Deltavec +A^*$ is somehow separated into a scalar differential part and a vectorial algebraic part.
Moreover, the operator $\opC$ we consider is $\opC=B^* 1_{\gamma}\dn$, and $1_{\gamma}\dn$ acts on the scalar differential part while $B^*$ acts on the vectorial algebraic part (recall that $B$ is a constant matrix).
In this particular situation we have good hopes to obtain an easier characterization than condition (\ref{HT}).
This is what establish the results in the following sections.

\begin{remark}\label{rem vp db}
We shall emphasize that the eigenvalues $-\lambda_l+\theta_i$  are not necessarily distinct.
All along this work, for an eigenvalue $s \in \spec{\Deltavec+A^*}$, we will denote by $l^s_1, \ldots, l^s_{r_s}$ and $i^s_1, \ldots, i^s_{r_s}$ (with possibly $r_s=1$) all the distinct indices such that
$$s=-\lambda_{l_{1}^{s}} +\theta_{i_{1}^{s}}=\ldots=-\lambda_{l_{r_s}^{s}} +\theta_{i_{r_s}^{s}}.$$
Note that $r_s<+\infty$ since there is a finite number of $\theta_i$.

As as result, any $u \in \ker(s-(\Deltavec+A^*))$ has a writing of the form
$$u=\sum_{k=1}^{r_s} \sum_{j,m} \alpha_{k,j,m} w_{i_k^s,j} \phi_{l_k^s,m},$$
for some $\alpha_{k,j,m} \in \C$.

Since we will always reason at $s$ fixed, we will omit the dependence with respect to $s$ during the proofs (for the sake of clarity), though we will keep this notation in the statements of the results.
\end{remark}

\subsection{A sufficient condition}\label{sect cond suffi syst1}

As noticed in Remark \ref{rem vp db} it may happen that some eigenvalue $s$ can be written as $s=-\lambda_l+\theta_i=-\lambda_{l'}+\theta_{i'}$ with $i' \neq i$ and $l' \neq l$.
This phenomenon of "resonance" is a consequence of the coupling (the matrix $A$) and as a result is specific to the fact that we study a system, in contrast with a single equation.
We will see that all the difficulties will precisely come from this point.
This fact has been highlighted for the very first time in \cite{FCGBdT}.
The following theorem shows that, when there is no phenomenon of resonance, the controllability is simply reduced to an algebraic condition, whatever the space dimension $N$ and the control domain $\gamma$ are.

\begin{theorem}\label{prop cond suffi}
Assume that for every eigenvalue $s \in \spec{\Deltavec+A^*}$ we have $r_s=1$.
Then, the ND system
$$
\left\{\begin{array}{rcll}
\ds \pt y &=&  \Deltavec y+Ay&\mbox{ in } (0,T) \times \Omega. \\
y&=& 1_{\gamma}Bg &\mbox{ on } (0,T) \times \partial \Omega. \\
y(0)&=& y_0 & \mbox{ in } \Omega.
\end{array}\right.
$$
is approximately controllable if and only if
\begin{equation}\label{HT alg}
\ker(\theta_i-A^*) \cap  \ker B^*=\ens{0}, \quad \forall i.
\end{equation}
\end{theorem}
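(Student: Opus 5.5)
By the reduction above (Theorem \ref{fat} together with Remark \ref{semigpe ana}), approximate controllability of (\ref{syst 1}) is equivalent to the Hautus-type condition (\ref{HT}). So the plan is to prove that, under the assumption $r_s=1$ for all $s$, condition (\ref{HT}) is equivalent to (\ref{HT alg}).

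\textbf{Step 1: structure of $\ker(s-\opA)$.} Fix $s\in\spec{\Deltavec+A^*}$. Since $r_s=1$, there is a unique pair $(l,i)$ with $s=-\lambda_l+\theta_i$. By Remark \ref{rem vp db}, every $u\in\ker(s-(\Deltavec+A^*))$ can then be written as
$$u=\sum_{j,m}\alpha_{j,m}w_{i,j}\phi_{l,m}=\sum_{m=1}^{m_l} v_m\phi_{l,m},\qquad v_m:=\sum_j \alpha_{j,m}w_{i,j}\in\ker(\theta_i-A^*).$$
Applying $\opC$ and using that $B^*$ is constant gives
$$B^*1_\gamma\dn u=\sum_m (B^*v_m)\,1_\gamma\dn\phi_{l,m}.$$

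\textbf{Step 2: sufficiency of (\ref{HT alg}).} Assume (\ref{HT alg}) and $\opC u=0$. The key analytic fact is that the normal derivatives $\{\dn\phi_{l,m}\}_{m=1}^{m_l}$ are linearly independent in $L^2(\gamma)$ for any nonempty relatively open $\gamma\subset\partial\Omega$. To see this, suppose $\sum_m c_m\dn\phi_{l,m}=0$ on $\gamma$. Then $\phi=\sum_m c_m\phi_{l,m}$ satisfies $-\Delta\phi=\lambda_l\phi$, $\phi=0$ and $\dn\phi=0$ on $\gamma$. Extending $\phi$ by zero across $\gamma$ to a slightly larger domain and applying Holmgren's theorem or the unique continuation property for the elliptic equation, we get $\phi\equiv0$, hence $c_m=0$ for all $m$.

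Taking coordinates of $B^*v_m\in\C^m$ componentwise, the identity $\sum_m (B^*v_m)\dn\phi_{l,m}=0$ on $\gamma$ then forces $B^*v_m=0$ for each $m$. By (\ref{HT alg}), $v_m=0$, so $u=0$ and (\ref{HT}) holds.

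\textbf{Step 3: necessity.} Suppose (\ref{HT alg}) fails, so there is $w\neq0$ in $\ker(\theta_i-A^*)\cap\ker B^*$. Take any $l$ (for instance $l=1$) and set $u=w\phi_{l,1}$. Then $u\in\ker(s-\opA)$ with $s=-\lambda_l+\theta_i$, and $\opC u=(B^*w)\,1_\gamma\dn\phi_{l,1}=0$. This violates (\ref{HT}), so the system is not approximately controllable. Note that this direction does not even use the assumption $r_s=1$.

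\textbf{Main obstacle.} The only nontrivial ingredient is the unique continuation in Step 2: an eigenfunction with zero Cauchy data on a boundary piece must vanish. This needs regularity of $\Omega$ and $\gamma$ of positive measure. I would justify it by extension by zero plus elliptic unique continuation, or cite a standard result.
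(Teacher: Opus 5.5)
Your proof is correct and essentially the paper's. Step 3 is Lemma \ref{lemma cond nec} verbatim, and Step 2 is Lemma \ref{lemma cond suffi} with its two ingredients applied in the opposite order: you first use (\ref{HT diff}), as linear independence of $\{\dn\phi_{l,m}\}_m$ on $\gamma$, and then (\ref{HT alg}), whereas the paper applies (\ref{HT alg}) pointwise on $\partial\Omega$ first and (\ref{HT diff}) second, which changes nothing. Your extension-by-zero and unique-continuation justification of (\ref{HT diff}), which the paper simply cites from \cite{MMS}, tacitly needs $\Omega$ connected, an assumption the paper also leaves implicit.
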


In general, the assumption of this theorem is not a necessary condition, except in some very particular cases, see Theorem \ref{1D B vect} in section \ref{sect B vect}.

\begin{remark}
Condition (\ref{HT alg}) is nothing but the condition of Theorem \ref{fat} on the algebraic part of the system (see also Remark \ref{rem dim finie}).
We would also expect to require the similar condition concerning the scalar differential part, namely
\begin{equation}\label{HT diff}
\ker(-\lambda_l-\Delta) \cap\ker( 1_{\gamma}\dn)=\ens{0}, \quad \forall l,
\end{equation}
but actually this condition is always fulfilled, see \cite[Lemma]{MMS}, so that it is implicitly hidden in the theorem (and this will be used in the proof).
This condition corresponds to the approximate controllability of the heat equation from the boundary.
\end{remark}

\begin{remark}\label{rem 1vp}
An easy but nontheless interesting consequence of Theorem \ref{prop cond suffi} is when $A^*$ has only one eigenvalue.
In this case the assumption is naturally satisfied.
This permits for instance to easily prove that a ND cascade system
$$
\left\{\begin{array}{rcll}
\ds \pt y &=&  \Deltavec y+
\left(\begin{array}{cccc}
0 & \cdots & \cdots & 0 \\
a_{21} & \ddots & & \vdots \\
\vdots & \ddots &  \ddots & \vdots \\
a_{n,1} & \cdots & a_{n,n-1} & 0
\end{array} \right)
y&\mbox{ in } (0,T) \times \Omega. \\
y&=& 1_{\gamma} \left(\begin{array}{c} 1 \\ 0 \\ \vdots \\ 0 \end{array}\right) g &\mbox{ on } (0,T) \times \partial \Omega. \\
y(0)&=& y_0 & \mbox{ in } \Omega.
\end{array}\right.
$$
is approximately controllable if and only if $a_{i,i-1} \neq 0$ for every $2 \leq i \leq n$.
\end{remark}

Theorem \ref{prop cond suffi} is a straightforward consequence of the following two lemma.
The first lemma shows that condition (\ref{HT alg}) is always a necessary condition for the approximate controllability of system (\ref{syst 1}), while the second lemma shows that this condition is also enough to "control" the eigenvalues $s$ for which $r_s=1$.
Since we assume that there are only such eigenvalues, Theorem \ref{prop cond suffi} will be proved.

\begin{lemma}\label{lemma cond nec}
If system (\ref{syst 1}) is approximately controllable, then (\ref{HT alg}) holds.
\end{lemma}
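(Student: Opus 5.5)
We argue by contraposition, using the characterization (\ref{HT}) obtained from Theorem \ref{fat}. Assume that (\ref{HT alg}) fails for some index $i$. Then there is a nonzero vector $w \in \ker(\theta_i-A^*) \cap \ker B^*$. We will exhibit a nonzero element of $\ker(s-(\Deltavec+A^*)) \cap \ker(B^*1_{\gamma}\dn)$ for a suitable $s\in\C$. By the equivalence established at the beginning of this section (together with Remark \ref{semigpe ana}), this shows that system (\ref{syst 1}) is not approximately controllable.

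The natural candidate separates the algebraic and differential parts. Take any Dirichlet eigenvalue $-\lambda_l$ of $\Delta$ and a corresponding eigenfunction $\phi_{l,1}$, and set
$$u = w\,\phi_{l,1}, \qquad s = -\lambda_l+\theta_i.$$
The check proceeds in three steps.

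\emph{Domain.} Since $w$ is a constant vector and $\phi_{l,1}\in H^2(\Omega)\cap H^1_0(\Omega)$, we have $u \in H^2(\Omega)^n\cap H^1_0(\Omega)^n = \dom{\opA}$.

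\emph{Eigenvector.} The vectorial Laplacian acts componentwise, so
$$\Deltavec u = w\,\Delta\phi_{l,1} = -\lambda_l\, w\,\phi_{l,1}.$$
Also $A^*u = (A^*w)\,\phi_{l,1} = \theta_i\, w\,\phi_{l,1}$. Together these give $(\Deltavec+A^*)u = s\,u$.

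\emph{Observation vanishes.} The normal derivative also acts componentwise, so $\dn u = w\,\dn\phi_{l,1}$. Hence
$$B^*1_{\gamma}\dn u = 1_{\gamma}\,(\dn\phi_{l,1})\,B^*w = 0.$$

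Finally, $u\neq 0$ because $w\neq 0$ and $\phi_{l,1}\neq 0$. Thus $u$ is a nonzero element of $\ker(s-(\Deltavec+A^*))\cap\ker(B^*1_\gamma\dn)$, so (\ref{HT}) is violated, which contradicts approximate controllability.

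There is no genuine obstacle in this argument. The only point requiring care is that the equivalence between approximate controllability and (\ref{HT}) rests on the hypotheses of Theorem \ref{fat}, namely the generation of a semigroup, compact resolvent and completeness of root vectors, which were verified at the beginning of this section. Alternatively, this direction can be seen directly, without the full theorem. The function $z(t)=e^{st}u$ solves the adjoint system, and its observation $B^*1_{\gamma}\dn z(t)$ vanishes identically. So the unique continuation property (\ref{ucp nat}) fails, and only the duality statement recalled in the introduction is needed.
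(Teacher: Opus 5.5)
Your proof is correct and is essentially the paper's argument: the paper also takes $u=\phi w$ with $\phi$ a nonzero Dirichlet eigenfunction and $w\in\ker(\theta_i-A^*)\cap\ker B^*$, notes that $u\in\ker(-\lambda_l+\theta_i-(\Deltavec+A^*))\cap\ker(B^*1_{\gamma}\dn)$, and invokes (\ref{HT}), only phrased directly ($u=0$ forces $w=0$) rather than by contraposition. Your closing remark is also valid and slightly more economical: this direction needs only the duality with (\ref{ucp nat}), via $z(t)=e^{st}u$, and not the full Fattorini theorem.
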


\begin{lemma}\label{lemma cond suffi}
Assume that (\ref{HT alg}) holds.
Then, for any eigenvalue $s \in \spec{\Deltavec+A^*}$ such that $r_s=1$, we have
$$
\ker(s-(\Deltavec+A^*)) \cap \ker(B^* 1_{\gamma}\dn)=\ens{0}.
$$
\end{lemma}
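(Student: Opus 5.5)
Fix an eigenvalue $s$ with $r_s=1$, so $s=-\lambda_l+\theta_i$ for a single pair $(l,i)$ (we drop the superscript $s$). By Remark \ref{rem vp db}, any $u \in \ker(s-(\Deltavec+A^*))$ can be written as
$$u=\sum_{j=1}^{m_i}\sum_{m=1}^{m_l} \alpha_{j,m}\, w_{i,j}\,\phi_{l,m}.$$
The plan is to use the separated structure: $1_\gamma\dn$ acts only on the scalar functions $\phi_{l,m}$, while $B^*$ acts only on the constant vectors $w_{i,j}$. We will reduce the condition $B^*1_\gamma\dn u=0$ first to an algebraic statement and then to a scalar unique continuation statement.

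First, regroup $u$ in two ways. Set
$$\psi_j=\sum_m \alpha_{j,m}\phi_{l,m}, \qquad u=\sum_j w_{i,j}\psi_j,$$
where each $\psi_j$ lies in the eigenspace $\ker(-\lambda_l-\Delta)$. Suppose $B^*1_\gamma\dn u=0$. Since $B^*$ is a constant matrix,
$$0=B^*1_\gamma\dn u=B^*\sum_j w_{i,j}\,1_\gamma\dn\psi_j \quad \text{a.e. on } \partial\Omega.$$
Pointwise for a.e. $x\in\partial\Omega$, the vector $v(x)=\sum_j (1_\gamma\dn\psi_j)(x)\, w_{i,j}$ lies in $\ker(\theta_i-A^*)$, because it is a linear combination of the $w_{i,j}$. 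It also lies in $\ker B^*$. By (\ref{HT alg}), $v(x)=0$. The vectors $w_{i,j}$ are linearly independent, so $1_\gamma\dn\psi_j=0$ a.e. on $\partial\Omega$ for every $j$.

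Next, each $\psi_j$ belongs to $\ker(-\lambda_l-\Delta)\cap\ker(1_\gamma\dn)$. By the result quoted as (\ref{HT diff}) (\cite[Lemma]{MMS}), this intersection is $\ens{0}$, so $\psi_j=0$. Since the $\phi_{l,m}$ are orthonormal, all $\alpha_{j,m}=0$, and therefore $u=0$.

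The step needing the most care is the pointwise algebraic argument. One should check that $1_\gamma\dn\psi_j\in L^2(\partial\Omega)$, which holds because $\psi_j\in H^2(\Omega)$, so that "a.e. $x$" makes sense and a finite family of measurable scalar coefficients can be handled pointwise. Everything else is routine linear algebra. The genuinely nontrivial input is the scalar unique continuation result (\ref{HT diff}), which we take from \cite{MMS}. It holds for arbitrary nonempty open $\gamma$ because eigenfunctions vanish with their Cauchy data on $\gamma$.
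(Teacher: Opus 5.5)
Your proposal is correct and follows essentially the same route as the paper: you regroup $u$ as $\sum_j w_{i,j}\psi_j$, use (\ref{HT alg}) together with the linear independence of the $w_{i,j}$ to kill each $1_\gamma\dn\psi_j$, and then use (\ref{HT diff}) and the independence of the $\phi_{l,m}$ to conclude $u=0$. Your pointwise a.e.\ remark on $\partial\Omega$ just makes explicit a step the paper treats implicitly by working with coefficients in $L^2(\partial\Omega)$.
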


\begin{proof}[Proof of Lemma \ref{lemma cond nec}]
Let $w \in \ker(\theta_i-A^*) \cap \ker B^*$.
Let $\lambda \in \spec{\Delta}$.
Taking any nonzero $\phi \in \ker(\lambda-\Delta)$ we see that $u=\phi w$ belongs to $\ker(\lambda+\theta_i-(\Deltavec+A^*)) \cap \ker (B^*1_{\gamma}\dn)$, so that $u=0$ by assumption, and thus also $w=0$.
\end{proof}

\begin{proof}[Proof of Lemma \ref{lemma cond suffi}]
Let $s \in \spec{\Deltavec+A^*}$ with $r_s=1$, $u \in \ker(s-(\Deltavec+A^*))\cap \ker(B^* 1_{\gamma}\dn)$.
Since $r_s=1$, $u$ writes
$$u=\sum_{j,m} \alpha_{j,m} w_{i_1,j}\phi_{l_1,m}$$
for some $\alpha_{j,m} \in \C$.
Let us set $\beta_j=1_{\gamma}\dn \left(\sum_{m}\alpha_{j,m} \phi_{l_1,m}\right) \in L^2(\partial\Omega)$ so that we have
$$B^*\left(\sum_{j} \beta_j w_{i_1,j}\right)=0.$$

Since $\sum_{j} \beta_j w_{i_1,j} \in \ker(\theta_{i_1} -A^*)$ we can use (\ref{HT alg}) to obtain $\sum_{j} \beta_j w_{i_1,j}=0$.
Using the linear independance of $\ens{w_{i_1,j}}_{j}$ we deduce that $\beta_j=0$ for every $j$, that is
$$1_{\gamma}\dn \left(\sum_{m}\alpha_{j,m}\phi_{l_1,m}\right)=0, \quad \forall j.$$

Since $\sum_{m}\alpha_{j,m}\phi_{l_1,m} \in \ker(-\lambda_{l_1}-\Delta)$, using now (\ref{HT diff}) gives $\sum_{m}\alpha_{j,m}\phi_{l_1,m}=0$.
Thanks to the linear independance of $\ens{\phi_{l_1,m}}_m$ we conclude that $\alpha_{j,m}=0$ for every $j,m$, that is $u=0$.
\end{proof}

\subsection{As many controls as equations}\label{asmcase}

As a second result we recover the known fact (see \cite[Theorem 5.3]{FCGBdT}) that we can control the system from the boundary if we put as many controls as equations.
In this particular case, the coupling becomes inconsequential (the matrix $A$ can even be $A=0$, that is no coupling at all).
This situation can be understood as $n$ uncoupled equations with one control for each.
This result has been obtained in \cite{FCGBdT} by means of a Carleman estimate but we provide here an alternative proof, which is also simpler in our case.

\begin{theorem}
The ND system
$$
\left\{\begin{array}{rcll}
\ds \pt y &=&  \Deltavec y+Ay&\mbox{ in } (0,T) \times \Omega. \\
y&=& 1_{\gamma}Bg &\mbox{ on } (0,T) \times \partial \Omega. \\
y(0)&=& y_0 & \mbox{ in } \Omega.
\end{array}\right.
$$
is approximately controllable if we assume that
$$\ker B^*=\ens{0}.$$
\end{theorem}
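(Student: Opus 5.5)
By the reduction to (\ref{HT}) obtained from Theorem \ref{fat}, it suffices to show that for every $s \in \C$,
$$\ker(s-(\Deltavec+A^*)) \cap \ker(B^*1_{\gamma}\dn)=\ens{0}.$$
The point is that when $\ker B^*=\ens{0}$ the matrix $B^*$ is injective. The observation $B^*1_{\gamma}\dn u=0$ therefore collapses to $1_{\gamma}\dn u=0$ componentwise, and the resonance issue disappears entirely. Nothing needs to be assumed about $r_s$.

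Concretely, I would fix $s$ and take $u \in \ker(s-(\Deltavec+A^*))$ with $B^*1_{\gamma}\dn u=0$. Since $B^*$ is a constant matrix acting pointwise on $1_{\gamma}\dn u \in L^2(\partial\Omega)^n$, injectivity gives $1_{\gamma}\dn u=0$, that is, $1_{\gamma}\dn u_k=0$ for each component $k$.

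I would not work with the expansion of Remark \ref{rem vp db}, since several $\theta_i$ may be involved. Instead I would pass to a basis in which $A^*$ is triangular. Choose an invertible matrix $P$ with $P^{-1}A^*P=T$ upper triangular (Schur form), and set $v=P^{-1}u$. Then $\Deltavec v + Tv = sv$, $v\in H^2\cap H^1_0$, and $1_{\gamma}\dn v=P^{-1}1_{\gamma}\dn u=0$.

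The last row of this system reads
$$\Delta v_n+t_{nn}v_n=sv_n,$$
so $v_n\in\ker(-(t_{nn}-s)-\Delta)$ and $1_{\gamma}\dn v_n=0$. Either $s-t_{nn}$ is not a Dirichlet eigenvalue of $\Delta$, in which case $v_n=0$ directly, or (\ref{HT diff}) (that is, \cite[Lemma]{MMS}) gives $v_n=0$.

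I then proceed by backward induction. Assuming $v_{k+1}=\dots=v_n=0$, row $k$ becomes $\Delta v_k+t_{kk}v_k=sv_k$ with $1_\gamma\dn v_k=0$, and the same argument yields $v_k=0$. Hence $v=0$, so $u=0$, and (\ref{HT}) holds.

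The only delicate step is the triangularization: one must check that conjugating by the constant matrix $P$ commutes with $\Deltavec$, with $\dn$, and with the boundary conditions. This is immediate because $P$ is constant. An alternative would be to decompose $u$ along the generalized eigenspaces of $A^*$, but the Schur form is cleaner.
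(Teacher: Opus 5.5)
Your proof is correct, but its decoupling step differs from the paper's. Both arguments begin the same way: injectivity of $B^*$ turns $B^*1_{\gamma}\dn u=0$ into $1_{\gamma}\dn u=0$ componentwise. Both also end the same way, with the scalar property (\ref{HT diff}) applied on a single Dirichlet eigenspace.

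In between, the paper writes $u=\sum_{k,j,m}\alpha_{k,j,m}w_{i_k,j}\phi_{l_k,m}$ as in Remark \ref{rem vp db}. It then uses the linear independence of the eigenvectors $\ens{w_{i,j}}_{i,j}$ of $A^*$ to split the boundary identity into one scalar identity $1_{\gamma}\dn\big(\sum_m\alpha_{k,j,m}\phi_{l_k,m}\big)=0$ per pair $(k,j)$. You instead triangularize $A^*$ by a constant similarity and eliminate $v_n,v_{n-1},\dots,v_1$ by backward induction.

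Your route does not need the description of $\ker(s-(\Deltavec+A^*))$ as a span of products $w_{i,j}\phi_{l,m}$, which the paper only states. It uses only that the coupling is triangular in a fixed basis, so it is more self-contained and less dependent on the product structure of the eigenfunctions.

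The paper's route has a different advantage: it is the same expansion-plus-linear-independence scheme used elsewhere in Section \ref{sect A} (Lemma \ref{lemma cond suffi}, Theorems \ref{ex 1D 1} and \ref{prop B vect}), where $B^*$ is not injective. In those settings one cannot obtain $1_{\gamma}\dn u=0$ for every component. Your induction needs the boundary condition on each $v_k$ separately, so it would not get started there.
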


\begin{proof}[Proof]
Let $s$ be an eigenvalue of $\Deltavec +A^*$ and $u \in \ker(s-(\Deltavec+A^*))\cap \ker (B^*1_{\gamma}\dn)$.
Then, $u$ writes
$$u=\sum_{k=1}^{r} \sum_{j,m} \alpha_{k,j,m} w_{i_k,j} \phi_{l_k,m}$$
for some $\alpha_{k,j,m} \in \C$.
Since $u \in \ker (B^*1_{\gamma}\dn)$ and $\ker B^*=\ens{0}$ by assumption, we have
$$\sum_{k,j} \left(\sum_{m} \alpha_{k,j,m} 1_{\gamma}\dn\phi_{l_k,m} \right) w_{i_k,j}=0.$$

By the linear independence of $\ens{w_{i,j}}_{i,j}$ we obtain
$$1_{\gamma}\dn \left(\sum_{m} \alpha_{k,j,m} \phi_{l_k,m}\right)=0, \quad \forall k, \forall j.$$

Since $\sum_{m} \alpha_{k,j,m} \phi_{l_k,m} \in \ker(-\lambda_{l_k}-\Delta)$ we deduce that $\sum_{m} \alpha_{k,j,m} \phi_{l_k,m}=0$ (using (\ref{HT diff})), and by the linear independence of $\ens{\phi_{l,m}}_m$ it follows that $\alpha_{k,j,m}=0$ for every $k,j,m$, that is $u=0$.
\end{proof}

\subsection{The 1D case}\label{sect 1D}

The 1D case is a very particular situation because the boundary is reduced to two points, $\ens{0}$ and $\ens{L}$, if $\Omega=(0,L)$.
In particular, only three possibilities arise for $\gamma$, namely $\gamma=\ens{0}$, $\gamma=\ens{L}$ or $\gamma= \ens{0} \cup \ens{L}$.
We will study these three cases.

The results of this section have already been obtained in \cite{AKBGBdT}, with another formulation though, and a different proof.

We start with the case $\gamma=\ens{0}$ (we refer to the beginning of section \ref{sect A} for the notations):

\begin{theorem}\label{ex 1D 1}
The 1D system
$$
\left\{\begin{array}{rcll}
\ds \pt y &=&  \Deltavec y+Ay&\mbox{ in } (0,T) \times (0,L). \\
y&=& 1_{\ens{0}}Bg &\mbox{ on } (0,T) \times \ens{0,L}. \\
y(0)&=& y_0 & \mbox{ in } (0,L).
\end{array}\right.
$$
is approximately controllable if and only if, for every $s \in \spec{\Deltavec+A^*}$, we have
$$\rank (B^* P_{i^s_1} | \cdots | B^* P_{i^s_{r_s}})=\sum_{k=1}^{r_s} m_{i_k}.$$
\end{theorem}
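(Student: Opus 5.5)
By Theorem \ref{fat} and Remark \ref{semigpe ana}, the system is approximately controllable if and only if condition (\ref{HT}) holds with $\gamma=\ens{0}$. So the plan is to fix $s\in\spec{\Deltavec+A^*}$ and prove that
$$\ker(s-(\Deltavec+A^*))\cap\ker(B^*1_{\ens{0}}\dn)=\ens{0}$$
holds if and only if the matrix $(B^*P_{i_1}|\cdots|B^*P_{i_r})$ has full column rank $\sum_k m_{i_k}$. We use the 1D facts: $m_l=1$, $\phi_l(x)=\sqrt{2/L}\,\sin{l\pi x/L}$, and $\lambda_l=(l\pi/L)^2$ are distinct. Each $\phi_l$ satisfies $\dn\phi_l(0)=-\phi_l'(0)=-\sqrt{2/L}\,l\pi/L\neq 0$.

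By Remark \ref{rem vp db}, any $u$ in the eigenspace is
$$u=\sum_{k=1}^{r}\sum_{j=1}^{m_{i_k}}\alpha_{k,j}\,w_{i_k,j}\,\phi_{l_k},\qquad \alpha_{k,j}\in\C.$$
Since $1_{\ens{0}}\dn$ only sees the value at $x=0$, the condition $B^*1_{\ens{0}}\dn u=0$ becomes a single vector equation in $\C^m$:
$$\sum_{k=1}^r \dn\phi_{l_k}(0)\, B^*P_{i_k}\alpha_k=0,\qquad \alpha_k=(\alpha_{k,1},\ldots,\alpha_{k,m_{i_k}})^T.$$
Set $\beta_k=\dn\phi_{l_k}(0)\,\alpha_k$. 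Because $\dn\phi_{l_k}(0)\neq0$, the map $(\alpha_k)_k\mapsto(\beta_k)_k$ is a bijection of $\C^{\sum_k m_{i_k}}$. The equation then reads
$$(B^*P_{i_1}|\cdots|B^*P_{i_r})\,\beta=0.$$

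For sufficiency, assume the rank condition. Then $\beta=0$, hence every $\alpha_k=0$, hence $u=0$.

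For necessity, assume the rank is deficient and take a nonzero $\beta$ in the kernel of the block matrix. Define $\alpha_k=\beta_k/\dn\phi_{l_k}(0)$ and build $u$ from these coefficients. This $u$ lies in the eigenspace and in $\ker(B^*1_{\ens{0}}\dn)$. It remains to check $u\neq0$. The family $\ens{w_{i_k,j}\phi_{l_k}}_{k,j}$ is linearly independent in $L^2(0,L)^n$, because the indices $i_k$ are pairwise distinct, so the vectors $\ens{w_{i_k,j}}_{k,j}$ are independent, being eigenvectors of distinct eigenvalues of $A^*$. Hence if $u=\sum_k \phi_{l_k}(x)P_{i_k}\alpha_k$ vanished identically, every $\alpha_k$ would be zero, contradicting $\beta\neq0$.

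The only genuinely delicate points are two: the nonvanishing of $\dn\phi_l(0)$, which is immediate from the explicit sine eigenfunctions, and the linear independence of the generating family. This last point is the step where some care is needed when two pairs share the same $l_k$ but have different $i_k$. Independence of the $w$'s across distinct $\theta_i$ settles it pointwise at any $x$ where $\phi_{l_k}(x)\neq0$. No analytic difficulty is expected beyond this.
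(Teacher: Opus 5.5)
Your proof is correct and takes essentially the same route as the paper. You expand $u$ in the eigenspace, reduce $B^*1_{\{0\}}\dn u=0$ to the block matrix $(\phi_{l_1}'(0)B^*P_{i_1}|\cdots|\phi_{l_r}'(0)B^*P_{i_r})$ acting on the coefficients, and drop the nonzero factors $\phi_{l_k}'(0)$ from the rank. Your necessity direction (the linear independence of $\{w_{i_k,j}\phi_{l_k}\}_{k,j}$) is more explicit than the paper's, but the case you flag, two pairs sharing the same $l_k$ with different $i_k$, cannot occur: $\theta_{i_k}=s+\lambda_{l_k}$ is then the same, so the $l_k$ are automatically pairwise distinct as well.
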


\begin{proof}[Proof]
Let $u \in \ker(s-(\Deltavec+A^*)) \cap  \ker B^* 1_{\ens{0}}\dn$, where $s \in \spec{\Deltavec+A^*}$.
We know that $u$ writes
$$u=\sum_{k=1}^{r} \sum_{j,m} \alpha_{k,j,m} w_{i_k,j} \phi_{l_k}$$
for some $\alpha_{k,j,m} \in \C$ and we have
$$\sum_{k=1}^{r} \sum_{j=1}^{m_{i_k}} \alpha_{k,j} B^*w_{i_k,j}  \phi_{l_k}'(0)=0.$$

This implies that $\alpha_{k,j}=0$ for every $k,j$ if and only if the matrix
$$\left(\begin{array}{ccccc} \phi_{l_1}'(0)B^*P_{i_1} & \big| &\cdots & \big| & \phi_{l_r}'(0)B^*P_{i_r} \end{array}\right)$$
has full rank, that is
$$\rank \left(\begin{array}{ccccc} \phi_{l_1}'(0)B^*P_{i_1} & \big| &\cdots & \big| & \phi_{l_r}'(0)B^*P_{i_r} \end{array} \right)=\sum_{k=1}^{r} m_{i_k}.$$

To conclude it remains to observe that
\begin{multline*}
\rank\left( \begin{array}{ccccc} \phi_{l_1}'(0)B^*P_{i_1} & \big| &\cdots & \big| & \phi_{l_r}'(0)B^*P_{i_r} \end{array}\right) \\
=\rank \left(\begin{array}{ccccc} B^*P_{i_1} & \big| &\cdots & \big| & B^*P_{i_r} \end{array} \right)
\end{multline*}
since $\phi_l'(0) \neq 0$ for every $l$.
\end{proof}

The same result holds if we consider $\gamma=\ens{L}$ instead of $\gamma=\ens{0}$.
When $\gamma$ is the whole boundary, that is $\gamma=\ens{0} \cup \ens{L}$, we have the following characterization:

\begin{theorem}\label{ex 1D 2}
The 1D system
$$
\left\{\begin{array}{rcll}
\ds \pt y &=&  \Deltavec y+Ay&\mbox{ in } (0,T) \times (0,L). \\
y&=& Bg &\mbox{ on } (0,T) \times \ens{0,L}. \\
y(0)&=& y_0 & \mbox{ in } (0,L).
\end{array}\right.
$$
is approximately controllable if and only if, for every $s \in \spec{\Deltavec+A^*}$, we have
$$\rank \left(\begin{array}{ccccc} \begin{array}{r} \phi_{l^s_1}'(0) B^* P_{i^s_1} \\ \phi_{l^s_1}'(L)  B^* P_{i^s_1} \end{array} & \Bigg| & \cdots & \Bigg| & \begin{array}{r} \phi_{l^s_{r_s}}'(0) B^* P_{i^s_{r_s}} \\ \phi_{l^s_{r_s}}'(L) B^* P_{i^s_{r_s}} \end{array} \end{array}\right)=\sum_{k=1}^{r_s} m_{i_k}.$$
\end{theorem}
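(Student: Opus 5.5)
The plan is to follow the proof of Theorem \ref{ex 1D 1} closely, now with the observation operator $\opC=B^*\dn$ acting on the whole boundary $\ens{0,L}$. By the discussion at the beginning of section \ref{sect A} (Theorem \ref{fat} together with Remark \ref{semigpe ana}), approximate controllability is equivalent to condition (\ref{HT}) with $\gamma=\ens{0}\cup\ens{L}$. So we fix $s\in\spec{\Deltavec+A^*}$ and look at $u\in\ker(s-(\Deltavec+A^*))\cap\ker(B^*\dn)$. For $s$ outside the spectrum the kernel is trivial, so only eigenvalues need to be checked.

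By Remark \ref{rem vp db}, and since in 1D every Dirichlet eigenspace is simple ($m_l=1$), such a $u$ can be written as
$$u=\sum_{k=1}^{r}\sum_{j=1}^{m_{i_k}}\alpha_{k,j}\, w_{i_k,j}\,\phi_{l_k}$$
with $\alpha_{k,j}\in\C$. The boundary $\ens{0,L}$ consists of two points, and $L^2(\partial\Omega)^m$ is then just $\C^m\times\C^m$ (values at $0$ and at $L$). Hence the condition $B^*\dn u=0$ on $\partial\Omega$ is exactly the pair of vector identities
$$\sum_{k,j}\alpha_{k,j}\,\phi_{l_k}'(0)\,B^*w_{i_k,j}=0,\qquad \sum_{k,j}\alpha_{k,j}\,\phi_{l_k}'(L)\,B^*w_{i_k,j}=0 .$$
We collect the unknowns into a vector $\alpha=(\alpha_{1,1},\dots,\alpha_{r,m_{i_r}})\in\C^{\sum_k m_{i_k}}$. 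Using $B^*P_{i_k}$, whose columns are the $B^*w_{i_k,j}$, the two identities combine into $M\alpha=0$. Here $M$ is the $2m\times\sum_k m_{i_k}$ block matrix appearing in the statement, with $k$-th block column $\begin{pmatrix}\phi_{l_k}'(0)B^*P_{i_k}\\ \phi_{l_k}'(L)B^*P_{i_k}\end{pmatrix}$.

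The final step is to note that the map $\alpha\mapsto u$ is injective. This holds because the family $\ens{w_{i,j}\phi_l}$ is linearly independent: the $\phi_{l_k}$ with distinct $l_k$ are independent, and for each $k$ the $w_{i_k,j}$ form a basis. Note also that distinct $k$ have distinct pairs $(l_k,i_k)$, and in fact distinct $l_k$, since $s$ is fixed. Therefore the kernel in (\ref{HT}) is trivial if and only if $M\alpha=0$ forces $\alpha=0$, that is, if and only if $M$ has full column rank $\sum_k m_{i_k}$. This gives exactly the rank condition of the statement, for every $s$.

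None of these steps is a real obstacle. The only point needing care is the bookkeeping showing that the kernel is parametrized injectively by $\alpha$: one uses the tensor-product independence of $\ens{w_{i,j}\phi_l}$ and the fact that equal $s$ with equal $l$ forces equal $\theta_i$. Unlike in Theorem \ref{ex 1D 1}, the factors $\phi_{l_k}'(0)$ and $\phi_{l_k}'(L)$ cannot be removed from the rank. The reason is that the ratio $\phi_l'(L)/\phi_l'(0)=\pm1$ depends on $l$, so these factors must stay in the statement. One can still remark that both are nonzero, as used earlier.
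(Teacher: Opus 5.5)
Your proposal is correct and follows the paper's argument. The paper simply says the proof of Theorem~\ref{ex 1D 1} carries over: expand $u$ on $\{w_{i_k,j}\phi_{l_k}\}$, impose $B^*\partial_n u=0$ at both $x=0$ and $x=L$, and read off full column rank of the stacked matrix. Your added check that $\alpha\mapsto u$ is injective, and your remark that the factors cannot be dropped here because $\phi_l'(L)=(-1)^l\phi_l'(0)$ on $(0,L)$, are both accurate.
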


The proof is the same as the proof of Theorem \ref{ex 1D 1}.

\begin{remark}
Further to these two theorems, we see that it may happen that system (\ref{syst 1}) is controllable with a control acting on both parts of the boundary whereas it is not controllable if the control only acts on one part.
Indeed, let us consider on $\Omega=(0,\pi)$ the system described by
$$A=
\left(\begin{array}{cc}
0 & -4 \\
1 & 5
\end{array} \right),
\quad
B= \left(\begin{array}{c} 1 \\ 0 \end{array}\right).
$$
We recall that the eigenvalues of $\Delta$ on $(0,\pi)$ are $-\lambda_l=-l^2$ and the corresponding eigenfunctions are $\phi_l(x)= \sqrt{\frac{2}{\pi}} \sin{l x}$.
We can check that
$$\spec{A^*}=\ens{\theta_1=1,\theta_2=4},$$
$$\ker(\theta_1-A^*)=\vect \ens{\left(\begin{array}{c} 1 \\ 1 \end{array}\right)},
\quad \ker(\theta_2-A^*)=\vect \ens{\left(\begin{array}{c} 1 \\ 4 \end{array}\right)}.$$
Since $r_s=1$ for every eigenvalue $s \in \spec{\Deltavec+A^*}$ except $s=-\lambda_1+\theta_1=-\lambda_2+\theta_2$, it is not difficult to check that the condition of Theorem \ref{ex 1D 2} is fulfilled, whereas the one of Theorem \ref{ex 1D 1} is not.
\end{remark}

\subsection{Only one control:{\mathversion{bold} $m=1$} }\label{sect B vect}

Another interesting situation is when we try to control system (\ref{syst 1}) with only one control.
This corresponds to $m=1$, so that the matrix $B$ is in fact a (column) vector.

\begin{remark}\label{mi egal un}
It is not difficult to see that the condition $\ker(\theta_i-A^*) \cap \ker B^*=\ens{0}$ is equivalent to (see the beginning of section \ref{sect A} for the notations)
$$\rank B^* P_i =m_i.$$
Thus, when $B^*$ has now only one line, we necessarily have
$$m_i=1.$$
In such a case, note also that $P_i$ is reduced to $w_{i,1}$, so that $B^*P_i$ is a scalar, and $\rank B^* P_i =1$ then simply means that this scalar is not zero.
\end{remark}

Let us come back to the 1D case.
We can always assume that $\ker(\theta_i-A^*) \cap \ker B^*=\ens{0}$ for every $i$ since it is a necessary condition (see Lemma \ref{lemma cond nec}).
According to Remark \ref{mi egal un}, we then know that $m_i=1$ and $B^*P_i$ is a nonzero scalar  for every $i$.
Thus, for every $s \in \spec{\Deltavec+A^*}$, we have
$$
\left\{\begin{array}{l}
\ds \rank \left( B^* P_{i^s_1} | \cdots | B^* P_{i^s_{r_s}} \right)=\rank (1 | \cdots | 1), \\
\ds \sum_{k=1}^{r_s} m_{i_k}=r_s.
\end{array}
\right.
$$

As a result, in this particular case which is $m=1$, Theorem \ref{ex 1D 1} becomes

\begin{theorem}\label{1D B vect}
Assume that $m=1$.
The 1D system
$$
\left\{\begin{array}{rcll}
\ds \pt y &=&  \Deltavec y+A y&\mbox{ in } (0,T) \times (0,L). \\
y&=& 1_{\ens{0}}Bg &\mbox{ on } (0,T) \times \ens{0,L}. \\
y(0)&=& y_0 & \mbox{ in } (0,L).
\end{array}\right.
$$
is approximately controllable if and only if the following two conditions hold:
\begin{enumerate}
\item $\ker(\theta_i-A^*) \cap  \ker B^*=\ens{0}$ for every $i$.
\item For every eigenvalue $s \in \spec{\Deltavec+A^*}$ we have $r_s=1$.
\end{enumerate}
\end{theorem}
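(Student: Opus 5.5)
The plan is to deduce the statement directly from Theorem \ref{ex 1D 1}, using Lemma \ref{lemma cond nec} and Remark \ref{mi egal un} to simplify the rank condition when $m=1$.

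\emph{Necessity.} Assume the system is approximately controllable. By Lemma \ref{lemma cond nec}, condition 1 holds. By Remark \ref{mi egal un}, this forces $m_i=1$ for every $i$, and each $B^*P_i=B^*w_{i,1}$ is a nonzero scalar. Now fix $s\in\spec{\Deltavec+A^*}$. Theorem \ref{ex 1D 1} gives
$$\rank \left(B^*P_{i^s_1}|\cdots|B^*P_{i^s_{r_s}}\right)=\sum_{k=1}^{r_s} m_{i_k}=r_s.$$
The left-hand side is the rank of a $1\times r_s$ matrix, so it is at most $1$. Hence $r_s\le 1$. Since $s$ is an eigenvalue, $r_s\ge 1$, and therefore $r_s=1$. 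This gives condition 2.

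\emph{Sufficiency.} Assume conditions 1 and 2. Condition 2 says that there is no resonance. Theorem \ref{prop cond suffi} applies in any dimension, and with condition 1 it yields approximate controllability at once. Alternatively, one can check the rank condition of Theorem \ref{ex 1D 1} directly. When $r_s=1$, the matrix reduces to the single nonzero scalar $B^*w_{i^s_1,1}$. Its rank is $1=m_{i^s_1}$, by Remark \ref{mi egal un} together with condition 1.

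There is no real obstacle here. The only point needing care is the order of the necessity argument. We must first invoke Lemma \ref{lemma cond nec} to get $m_i=1$, because only then does the right-hand side of the rank condition equal $r_s$. After that, the bound $\rank\le 1$ for a single-row matrix finishes the proof.
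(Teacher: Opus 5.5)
Your proof is correct and follows the paper's own argument. The paper also uses Lemma \ref{lemma cond nec} and Remark \ref{mi egal un} to get $m_i=1$ with $B^*P_i$ a nonzero scalar, which turns the rank condition of Theorem \ref{ex 1D 1} into $\rank(1|\cdots|1)=r_s$. Your alternative route to sufficiency via Theorem \ref{prop cond suffi} is also valid.
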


This result is historically the first relevant difference between distributed and boundary controllability for parabolic systems (these properties are equivalent for the heat equation for instance).
This has been proved in \cite{FCGBdT}.
Moreover, this also shows that if this system is controllable with a boundary control then it is also controllable with a distributed control (recall that the distributed controllability of this system is characterized by only the first condition, see \cite{AKBDGB}).
We insist on the fact that this is a result in 1D; except in the framework of \cite{ABL}, the problem is open in higher space dimension.

We have a similar result for Theorem \ref{ex 1D 2} when $m=1$:

\begin{theorem}
Assume that $m=1$.
The 1D system
$$
\left\{\begin{array}{rcll}
\ds \pt y &=&  \Deltavec y+A y&\mbox{ in } (0,T) \times (0,L). \\
y&=& Bg &\mbox{ on } (0,T) \times \ens{0,L}. \\
y(0)&=& y_0 & \mbox{ in } (0,L).
\end{array}\right.
$$
is approximately controllable if and only if the following two conditions hold:
\begin{enumerate}
\item $\ker(\theta_i-A^*) \cap \ker B^* =\ens{0}$ for every $i$.
\item For every eigenvalue $s \in \spec{\Deltavec+A^*}$, either $r_s=1$, either $r_s=2$ and in this latter case:
$$\rank \left(\begin{array}{cc} \phi_{l^s_1} '(0) & \phi_{l^s_2}'(0) \\ \phi_{l^s_1} '(L) & \phi_{l^s_2}'(L) \end{array}  \right)=2.$$
\end{enumerate}
\end{theorem}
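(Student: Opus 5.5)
The plan is to specialize Theorem \ref{ex 1D 2} to the case $m=1$, in the same way Theorem \ref{1D B vect} was obtained from Theorem \ref{ex 1D 1}. By Lemma \ref{lemma cond nec}, condition 1 is necessary, so we may assume it throughout. By Remark \ref{mi egal un}, condition 1 then gives $m_i=1$ for every $i$, and each $B^*P_i=B^*w_{i,1}=:b_i$ is a nonzero scalar. For a fixed eigenvalue $s$, the matrix in Theorem \ref{ex 1D 2} therefore reduces to the $2\times r_s$ matrix
$$M_s=\left(\begin{array}{ccc} b_{i_1}\phi_{l_1}'(0) & \cdots & b_{i_{r}}\phi_{l_{r}}'(0) \\ b_{i_1}\phi_{l_1}'(L) & \cdots & b_{i_{r}}\phi_{l_{r}}'(L) \end{array}\right),$$
and the required rank equals $\sum_k m_{i_k}=r_s$.

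Since the nonzero scalars $b_{i_k}$ multiply whole columns, they do not change the rank, so $\rank M_s=\rank(\phi_{l_k}'(0);\phi_{l_k}'(L))_{k}$. A $2\times r_s$ matrix has rank at most $2$, so the condition $\rank M_s=r_s$ forces $r_s\le 2$. If $r_s=1$, the condition holds automatically because $\phi_l'(0)\neq 0$, as already used in the proof of Theorem \ref{ex 1D 1}. If $r_s=2$, the condition is exactly the stated $2\times 2$ determinant condition. Conversely, conditions 1 and 2 give $m_i=1$, $b_i\neq0$, and then the rank condition of Theorem \ref{ex 1D 2} for every $s$, hence controllability.

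This gives both directions. Controllability implies condition 1 by Lemma \ref{lemma cond nec}, and the rank condition of Theorem \ref{ex 1D 2} then yields condition 2 by the rank count above. Conversely, conditions 1 and 2 together give the rank condition of Theorem \ref{ex 1D 2}, hence controllability.

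There is no real obstacle beyond bookkeeping. The only points to check carefully are the reduction $m_i=1$ via Remark \ref{mi egal un}, and that $\phi_l'(0)\neq0$ for Dirichlet eigenfunctions, which holds since $\phi_l(0)=\phi_l'(0)=0$ would force $\phi_l\equiv0$ by ODE uniqueness.
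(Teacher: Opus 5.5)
Your proof is correct and follows the paper's route. You take condition 1 as necessary by Lemma \ref{lemma cond nec} and use Remark \ref{mi egal un} to get $m_i=1$ and $B^*P_i\neq 0$; the rank condition of Theorem \ref{ex 1D 2} then becomes a rank-$r_s$ requirement on a $2\times r_s$ matrix, which forces $r_s\le 2$ and gives exactly condition 2. This is the same specialization the paper uses to pass from Theorem \ref{ex 1D 1} to Theorem \ref{1D B vect}, and your observation that $\phi_l'(0)\neq 0$ correctly settles the $r_s=1$ case.
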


Finally, let us give a result in any dimension when $m=1$.

\begin{theorem}\label{prop B vect}
Assume that $m=1$.
The ND system
$$
\left\{\begin{array}{rcll}
\ds \pt y &=&  \Deltavec y+Ay&\mbox{ in } (0,T) \times \Omega. \\
y&=& 1_{\gamma}Bg &\mbox{ on } (0,T) \times \partial \Omega. \\
y(0)&=& y_0 & \mbox{ in } \Omega.
\end{array}\right.
$$
is approximately controllable if and only if the following two conditions hold:
\begin{enumerate}
\item $\ker(\theta_i-A^*) \cap \ker B^*=\ens{0}$ for every $i$.
\item
For every $s \in \spec{\Deltavec+A^*}$, we have
$$\bigg(\ker( -\lambda_{l^s_1} -\Delta) + \ldots + \ker(-\lambda_{l^s_{r_s}} -\Delta)\bigg) \cap  \ker( 1_{\gamma}\dn)=\ens{0}.$$
\end{enumerate}
\end{theorem}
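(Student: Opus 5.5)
The plan is to reduce everything to the Fattorini criterion (\ref{HT}): the system is approximately controllable if and only if, for every $s \in \spec{\Deltavec+A^*}$, the space $\ker(s-(\Deltavec+A^*)) \cap \ker(B^*1_{\gamma}\dn)$ is trivial. By Lemma \ref{lemma cond nec}, condition 1 is necessary. We may therefore assume condition 1 throughout and show that, under it, (\ref{HT}) is equivalent to condition 2.

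Since $m=1$, Remark \ref{mi egal un} gives $m_i=1$ for every $i$. Moreover $c_i:=B^*w_{i,1}$ is a nonzero scalar. Fix $s$ and write $l_k=l^s_k$, $i_k=i^s_k$ for $k=1,\dots,r_s$. Because the $\theta_{i_k}$ are distinct, the $i_k$ are distinct. Since $-\lambda_{l_k}+\theta_{i_k}=s$, the $l_k$ are then distinct too. By Remark \ref{rem vp db}, any $u\in\ker(s-(\Deltavec+A^*))$ can be written as
$$u=\sum_{k=1}^{r_s} w_{i_k,1}\,\psi_k, \qquad \psi_k\in\ker(-\lambda_{l_k}-\Delta).$$
Applying $B^*1_\gamma\dn$ gives
$$B^*1_\gamma\dn u=\sum_k c_k\,1_\gamma\dn\psi_k = 1_\gamma\dn\Big(\sum_k c_k\psi_k\Big).$$

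For sufficiency, assume condition 2 and let $u$ lie in the intersection. Then $\Psi:=\sum_k c_k\psi_k$ lies in $\big(\sum_k\ker(-\lambda_{l_k}-\Delta)\big)\cap\ker(1_\gamma\dn)$, so condition 2 gives $\Psi=0$. The eigenspaces of $\Delta$ for the distinct eigenvalues $-\lambda_{l_k}$ are mutually orthogonal, so the sum is direct. Hence $c_k\psi_k=0$ for every $k$, and since $c_k\neq0$, $\psi_k=0$. Therefore $u=0$.

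For necessity of condition 2, take $\Psi=\sum_k\psi_k$ in the intersection appearing in condition 2, with $\psi_k\in\ker(-\lambda_{l_k}-\Delta)$. Set
$$u=\sum_k c_k^{-1} w_{i_k,1}\psi_k \in \ker(s-(\Deltavec+A^*)).$$
By the computation above, $B^*1_\gamma\dn u=1_\gamma\dn\Psi=0$, so (\ref{HT}) forces $u=0$. Since the $w_{i_k,1}$ are eigenvectors for distinct eigenvalues, they are linearly independent. Evaluating pointwise in $x$, this gives $\psi_k=0$ for every $k$, hence $\Psi=0$.

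The only delicate step is bookkeeping rather than analysis: checking that the indices $l_k$ are distinct and the $w_{i_k,1}$ are independent, so that both the direct-sum decomposition and the rescaling by $c_k^{-1}$ are legitimate. Everything else follows from Theorem \ref{fat}, Lemma \ref{lemma cond nec} and Remark \ref{mi egal un}.
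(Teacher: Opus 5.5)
Your proposal is correct and follows the paper's proof essentially step for step. For sufficiency, you pull the nonzero scalars $B^*w_{i_k}$ inside $1_{\gamma}\dn$ and apply condition 2, then use the directness of the sum of eigenspaces and condition 1. For necessity, you test with the rescaled eigenfunction $u=\sum_k (B^*w_{i_k})^{-1}w_{i_k}\psi_k$. The only cosmetic differences are that you invoke $m_i=1$ (Remark \ref{mi egal un}) from the outset, and you make explicit the distinctness of the $l_k$ and the directness of the sum, which the paper uses implicitly.
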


Note that the second condition is relevant only for $s$ with $r_s >1$, see (\ref{HT diff}).

\begin{proof}[Proof of Theorem \ref{prop B vect}]
Let $s \in \spec{\Deltavec +A^*}$ and let $u \in \ker(s-(\Deltavec+A^*))\cap \ker(B^* 1_{\gamma}\dn)$.
We know that $u$ writes
$$u=\sum_{k=1}^{r} \sum_{j,m} \alpha_{k,j,m} w_{i_k,j} \phi_{l_k,m}$$
for some, $\alpha_{k,j,m} \in \C$ and we have
$$1_{\gamma}\dn \left(\sum_{k} \sum_{m} \beta_{k,m} \phi_{l_k,m}\right)=0,$$
where $\beta_{k,m}=\sum_{j} \alpha_{k,j,m} B^*w_{i_k,j}$.

Since $B^*$ is a row vector, $\beta_{k,m}$ is a scalar, so that we can use the second condition and obtain that $\sum_{m} \beta_{k,m} \phi_{l_k,m}=0$ for every $k$.
By the linear indepedence of $\ens{\phi_{l,m}}_m$ we obtain that $\beta_{k,m}=0$ for every $k,m$, that is
$$B^*\left(\sum_{j} \alpha_{k,j,m} w_{i_k,j}\right)=0, \quad \forall k,m.$$

Using now the first condition this gives $\sum_{j} \alpha_{k,j,m} w_{i_k,j}=0$ and it follows that $\alpha_{k,j,m}=0$ for every $k,j,m$, that is $u=0$.

Let us now show that these conditions are also necessary.
We only prove it for the second condition since it is already known for the first one, see Lemma \ref{lemma cond nec}.

Let $\phi=\phi_{l_1}+\ldots+\phi_{l_r}$, with $\phi_l \in \ker(-\lambda_l-\Delta)$, be such that $1_{\gamma}\dn \phi=0$.
For every $k$, let $w_{i_k}$ be any eigenvector of $A^*$ associated with $\theta_{i_k}$.
We know that $B^*w_i$ is a scalar and $B^*w_i \neq 0$ thanks to the first condition (we have just recalled that it is a necessary condition).
Thus, we can define
$$u=\frac{1}{B^*w_{i_1}}w_{i_1} \phi_{l_1}+\ldots+\frac{1}{B^*w_{i_r}} w_{i_r} \phi_{l_r}.$$
We can see that $u \in \ker(s-(\Deltavec+A^*)) \cap \ker(B^* 1_{\gamma}\dn) $ so that $u=0$ by assumption.
It follows from the linear independence of $\ens{w_i}_i$ that $\phi_{l_k}=0$ for every $k$, that is $\phi=0$.
\end{proof}

\subsection{On a rectangular domain}\label{sect appli rect}

In this section we still consider system (\ref{syst 1}) but the domain $\Omega$ is now a rectangle
$$\Omega=(0,X_1) \times (0,X_2).$$

We denote the faces of our rectangle by $\gamma_L$, $\gamma_R$, $\gamma_T$ and $\gamma_B$, as on Figure \ref{rectangle}:
\begin{figure}[!h]
\begin{center}
\begin{tikzpicture}
\draw (0,0) rectangle (6,3)  ;
\node at (3,1.5) {\LARGE $\Omega$};
\node at (-0.5,1.5) {\Large $\gamma_L$};
\node at (6.5,1.5) {\Large $\gamma_R$};
\node at (3,3.5) {\Large $\gamma_T$};
\node at (3,-0.5) {\Large $\gamma_B$};
\node at (0,-0.30) {$(0,0)$};
\node at (0,3.30) {$(0,X_2)$};
\node at (6,-0.30) {$(X_1,0)$};
\end{tikzpicture}
\caption{Domain $\Omega$ for section \ref{sect appli rect}.}
\label{rectangle}
\end{center}
\end{figure}

The goal of this section is to prove several results about the boundary controllability of system (\ref{syst 1}), by discussing on the geometric position of $\gamma$.

\begin{theorem}\label{prop rect 1}
If $\gamma=\gamma_L$, then system (\ref{syst 1}) is approximately controllable if and only if so is the following 1D system
\begin{equation}\label{syst rect 1D}
\left\{\begin{array}{rcll}
\ds \pt y &=&  \Deltavec_{x_1} y+Ay&\mbox{ in } (0,T) \times (0,X_1). \\
y&=& 1_{\ens{0}}Bg &\mbox{ on } (0,T) \times \ens{0,X_1}. \\
y(0)&=& y_0 & \mbox{ in } (0,X_1).
\end{array}\right.
\end{equation}

If $\gamma=\gamma_L\cup \gamma_R$ then system (\ref{syst 1}) is approximately controllable if and only if so is the following 1D system
$$
\left\{\begin{array}{rcll}
\ds \pt y &=&  \Deltavec_{x_1} y+Ay&\mbox{ in } (0,T) \times (0,X_1). \\
y&=& Bg &\mbox{ on } (0,T) \times \ens{0,X_1}. \\
y(0)&=& y_0 & \mbox{ in } (0,X_1).
\end{array}\right.
$$
\end{theorem}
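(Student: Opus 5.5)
We will apply the Hautus-type criterion (\ref{HT}) to both systems, so everything reduces to comparing the eigenspaces. On the rectangle, the Dirichlet eigenfunctions of $\Delta$ are the products $\phi_p^{X_1}(x_1)\phi_q^{X_2}(x_2)$, with eigenvalue $-(\lambda_p^{X_1}+\lambda_q^{X_2})$. Consequently every $u\in\ker(s-(\Deltavec+A^*))$ can be written as
$$u(x_1,x_2)=\sum_{q} v_q(x_1)\,\phi_q^{X_2}(x_2),$$
where each $v_q$ is a finite sum of terms $w_{i,j}\phi_p^{X_1}$ satisfying $-\lambda_p^{X_1}+\theta_i=s+\lambda_q^{X_2}$. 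Equivalently, $v_q\in\ker\big((s+\lambda_q^{X_2})-(\Deltavec_{x_1}+A^*)\big)$, the eigenspace of the 1D operator on $(0,X_1)$. Only finitely many $q$ occur, because $\lambda_q^{X_2}\to\infty$ and there are finitely many $\theta_i$.

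On $\gamma_L=\{0\}\times(0,X_2)$ the outward normal derivative is $-\partial_{x_1}$. Hence
$$B^*1_{\gamma_L}\dn u=-\sum_q B^*v_q'(0)\,\phi_q^{X_2}(x_2)$$
as a function of $x_2\in(0,X_2)$. The family $\{\phi_q^{X_2}\}_q$ is linearly independent in $L^2(0,X_2)$, so this quantity vanishes if and only if $B^*v_q'(0)=0$ for every $q$. That last condition says exactly that $B^*1_{\{0\}}\partial_n v_q=0$ for the 1D system.

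\emph{If the 1D system (\ref{syst rect 1D}) is approximately controllable.} Take $u$ in the kernel intersection for the 2D system. The previous paragraph shows that each $v_q$ lies in $\ker(s_q-(\Deltavec_{x_1}+A^*))\cap\ker(B^*1_{\{0\}}\dn)$ with $s_q=s+\lambda_q^{X_2}$. The criterion (\ref{HT}) for the 1D system then gives $v_q=0$ for all $q$, hence $u=0$.

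\emph{If the 2D system is approximately controllable.} Take $v\in\ker(\sigma-(\Deltavec_{x_1}+A^*))\cap\ker(B^*1_{\{0\}}\dn)$ on $(0,X_1)$ and set $u=v(x_1)\phi_1^{X_2}(x_2)$. This $u$ belongs to $\ker((\sigma-\lambda_1^{X_2})-(\Deltavec+A^*))$, and $B^*1_{\gamma_L}\dn u=-B^*v'(0)\phi_1^{X_2}=0$. The 2D criterion gives $u=0$, hence $v=0$.

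For $\gamma=\gamma_L\cup\gamma_R$ the argument is identical. The restriction to $\gamma_R$ gives $\sum_q B^*v_q'(X_1)\phi_q^{X_2}$, so we obtain the two conditions $B^*v_q'(0)=0$ and $B^*v_q'(X_1)=0$, which are exactly the 1D condition with control on both endpoints.

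The only delicate point is the decomposition step. We must justify that an element of the 2D eigenspace splits as a finite sum over $q$ with each $v_q$ in a 1D eigenspace. This follows by projecting onto $\phi_q^{X_2}$ in the $x_2$ variable: that projection commutes with $\Deltavec+A^*$. We must also justify that the boundary traces can be taken term by term, which is immediate since all sums are finite and each term is smooth up to the boundary. After that, the argument is the linear-independence reasoning already used in the proofs of Lemma \ref{lemma cond suffi} and Theorem \ref{ex 1D 1}.
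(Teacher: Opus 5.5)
Your proof is correct and follows the paper's route. You expand the 2D eigenfunction along the modes $\phi_q^{X_2}$, use their linear independence on $\gamma_L$ (and $\gamma_R$) to split the boundary condition mode by mode, and apply the 1D Hautus test to each $x_1$-coefficient, which lies in the 1D eigenspace for $s+\lambda_q^{X_2}$. Grouping directly by $q$ is a cleaner bookkeeping than the paper's $r=2$ case analysis plus induction, and it avoids appealing to Lemma~\ref{lemma cond nec}; your converse via $u=v\,\phi_1^{X_2}$ also supplies the direction the paper calls easier and omits.
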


We recall that the controllability of these 1D systems has been studied in sections \ref{sect 1D} and \ref{sect B vect}.

For the heat equation, a similar result has been established in \cite{Mil} for the null-controllability when $\gamma$ is one of the faces of $\partial\Omega$.

We consider next the case of two consecutive faces, with for instance $\gamma=\gamma_L \cup \gamma_T$.

\begin{theorem}\label{prop rect 2}
If $\gamma=\gamma_L \cup \gamma_T$ and $\ker(\theta_i-A^*) \cap \ker B^*=\ens{0}$ for every $i$, then system (\ref{syst 1}) is approximately controllable for $n=2$.
\end{theorem}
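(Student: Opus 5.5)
We will verify the Hautus-type condition (\ref{HT}) obtained from Theorem \ref{fat}, distinguishing cases according to the spectral structure of $A^*$ (here $n=2$). If $A^*$ has only one eigenvalue, then $r_s=1$ for every $s$ and Theorem \ref{prop cond suffi} applies directly. If $\ker B^*=\ens{0}$, the theorem of section \ref{asmcase} applies.

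So the remaining case is this: $A^*$ has two distinct eigenvalues $\theta_1,\theta_2$, and $B^*$ has rank $1$. By Remark \ref{mi egal un}, $m_1=m_2=1$. Since $B^*$ has rank $1$, we can write $B^*=c\,b^*$ with $c\in\C^m\setminus\ens{0}$ and $b\in\C^2$, so we may assume $m=1$. The hypothesis gives $b_i:=B^*w_i\neq 0$. Theorem \ref{prop B vect} then reduces everything to its second condition, and only the $s$ with $r_s=2$ matter. So we must show the following: if $\Phi=\varphi+\psi$ with $\varphi\in\ker(-\lambda_l-\Delta)$, $\psi\in\ker(-\lambda_{l'}-\Delta)$, $\lambda_l<\lambda_{l'}$, and $\dn\Phi=0$ on $\gamma_L\cup\gamma_T$, then $\varphi=\psi=0$.

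To do this, expand in the product basis $\phi_p^{X_1}(x_1)\phi_q^{X_2}(x_2)$ of the Dirichlet eigenfunctions of the rectangle:
$$\varphi=\sum_{\lambda_p^{X_1}+\lambda_q^{X_2}=\lambda_l} a_{pq}\,\phi_p^{X_1}\phi_q^{X_2},\qquad \psi=\sum_{\lambda_p^{X_1}+\lambda_q^{X_2}=\lambda_{l'}} c_{pq}\,\phi_p^{X_1}\phi_q^{X_2}.$$
Both sums are finite. On $\gamma_L$ we have $\dn\Phi=-\partial_{x_1}\Phi(0,x_2)$. Using the orthonormality of $\ens{\phi_q^{X_2}}_q$, the vanishing of $\dn\Phi$ on $\gamma_L$ gives, for each fixed $q$,
$$a_{p_1q}\,(\phi_{p_1}^{X_1})'(0)+c_{p_2q}\,(\phi_{p_2}^{X_1})'(0)=0.$$
Here $p_1$ is the unique index with $\lambda_{p_1}^{X_1}=\lambda_l-\lambda_q^{X_2}$ and $p_2$ the unique one with $\lambda_{p_2}^{X_1}=\lambda_{l'}-\lambda_q^{X_2}$; we set a coefficient to $0$ if the corresponding index does not exist. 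Necessarily $p_2>p_1$. Since $(\phi_p^{X_1})'(0)\neq0$ for every $p$, a coefficient on one side of this relation is nonzero if and only if its partner is. Symmetrically, the condition on $\gamma_T$ gives, for each fixed $p$, that $a_{pq_1}\neq0$ if and only if $c_{pq_2}\neq 0$, with $q_2>q_1$.

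The key step, and the only delicate one, is a chain argument. Suppose some $a_{pq}\neq0$. Its $\gamma_L$-partner $c_{p'q}$ is then nonzero with $p'>p$. The $\gamma_T$-partner of $c_{p'q}$ is a nonzero $a_{p'q''}$ with $q''<q$. Its $\gamma_L$-partner is a nonzero $c_{p'''q''}$ with $p'''>p'$, and so on. This produces infinitely many nonzero coefficients with strictly increasing first index, contradicting the finiteness of the sums. Hence $\varphi=0$. The relations on $\gamma_L$ then force every $c_{pq}$ to vanish, so $\psi=0$.

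Therefore the second condition of Theorem \ref{prop B vect} holds, and together with the hypothesis $\ker(\theta_i-A^*)\cap\ker B^*=\ens{0}$ this yields approximate controllability.
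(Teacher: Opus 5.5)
Your proof is correct, but its key combinatorial step differs from the paper's.

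Both arguments start from the same pairing relations. On $\gamma_L$ coefficients are paired with the same $q$, and on $\gamma_T$ with the same $p$. Since $(\phi_p^{X_1})'(0)$ and $(\phi_q^{X_2})'(X_2)$ never vanish, paired coefficients vanish together.

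The paper then argues globally. The $q$-indices, and likewise the $p$-indices, of the nonzero coefficients in the two eigenspace components coincide. By (\ref{q diff}), both supports therefore have the same cardinality $M$ and the same total $\sum(\lambda_p^{X_1}+\lambda_q^{X_2})$. This gives $M\lambda_l=M\lambda_{l'}$, and since $-\lambda_l+\theta_{i_1}=-\lambda_{l'}+\theta_{i_2}$, it forces $\theta_{i_1}=\theta_{i_2}$, a contradiction.

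You instead use the order $\lambda_l<\lambda_{l'}$ to build a staircase of nonzero coefficients with $p$ strictly increasing and $q$ strictly decreasing. This is a descent argument rather than a summation one. It also handles partially vanishing coefficient families explicitly, whereas the paper's phrasing (``with $\gamma_{k,m}\neq 0$ for every $k,m$'') tacitly requires restricting to supports.

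The overall structure also differs. The paper neither splits off the case $\ker B^*=\{0\}$ nor reduces to $m=1$, because its argument only uses whether the possibly vector-valued $\gamma_{k,m}=B^*(\cdots)$ vanish. Your detour through Theorem~\ref{prop B vect} costs a few lines. In exchange, it isolates an $A$-independent fact: a sum of two Dirichlet eigenfunctions of the rectangle with distinct eigenvalues has vanishing normal derivative on two adjacent sides only if both terms vanish.
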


The geometry of $\gamma$ (including two different directions $\gamma_L$ and $\gamma_T$) is such that in some sense it "creates" an additional control.
Thus, everything happens as if we had two controls for two equations and we can expect the controllability to hold, as it is showed in section \ref{asmcase}.
Theorem \ref{prop rect 1} shows that this is not true if we pick two parallel faces $\gamma=\gamma_L \cup\gamma_R$.
When more equations are considered, the following counter-example strengthen this point of view.

\begin{theorem}\label{prop rect 3}
Even if $\gamma=\gamma_L \cup \gamma_T$ and $\ker(\theta_i-A^*) \cap \ker B^*=\ens{0}$ for every $i$, system (\ref{syst 1}) may be not approximately controllable when $n \geq 4$.
\end{theorem}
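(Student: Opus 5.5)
We will exhibit an explicit counter-example on the square $\Omega=(0,\pi)\times(0,\pi)$ with a single control ($m=1$), and invert the logic of Theorem~\ref{fat}: rather than proving that $\ker(s-(\Deltavec+A^*))\cap\ker(B^*1_{\gamma}\dn)=\ens{0}$ for all $s$, we build a nonzero $u$ in this intersection for one well-chosen $s$. The algebraic Hautus condition will be built into the choice of $A$ and $B$. All the work is then to make $u$ exist, which by Theorem~\ref{prop B vect} means exploiting a resonance $r_s>1$ together with a nonzero sum of Dirichlet eigenfunctions whose normal derivative vanishes on $\gamma_L\cup\gamma_T$.

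\emph{Step 1: eigenfunctions.} The Dirichlet eigenfunctions on the square are $\sin(px_1)\sin(qx_2)$, with eigenvalue $-(p^2+q^2)$. Look for $\psi=\sum_{p,q\in\ens{1,2}}c_{pq}\sin(px_1)\sin(qx_2)$.

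On $\gamma_L=\ens{x_1=0}$, $\dn\psi$ equals, up to sign, $\sum_{q}\big(\sum_p p\,c_{pq}\big)\sin(qx_2)$. On $\gamma_T=\ens{x_2=\pi}$, $\dn\psi$ equals $\sum_p\big(\sum_q q(-1)^q c_{pq}\big)\sin(px_1)$. Hence $1_{\gamma}\dn\psi=0$ reduces to four linear equations:
$$c_{11}+2c_{21}=0,\quad c_{12}+2c_{22}=0,\quad -c_{11}+2c_{12}=0,\quad -c_{21}+2c_{22}=0.$$
This system is consistent, with solution $c_{22}=1$, $c_{12}=-2$, $c_{21}=2$, $c_{11}=-4$. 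So we set
$$\psi_2=-4\sin{x_1}\sin{x_2},\qquad \psi_5=2\sin{2x_1}\sin{x_2}-2\sin{x_1}\sin{2x_2},\qquad \psi_8=\sin{2x_1}\sin{2x_2}.$$
These belong to the eigenspaces of $\Delta$ for $-2$, $-5$, $-8$ respectively, and $1_{\gamma}\dn(\psi_2+\psi_5+\psi_8)=0$.

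\emph{Step 2: a resonant coupling.} Take $n\geq 4$ and
$$A=A^*=\mathrm{diag}(0,3,6,\theta_4,\ldots,\theta_n),$$
with all diagonal entries distinct, and $B=(1,\ldots,1)^{T}$. The eigenvectors of $A^*$ are the canonical vectors $e_i$, and $B^*e_i=1\neq0$, so $\ker(\theta_i-A^*)\cap\ker B^*=\ens{0}$ for every $i$.

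For $s=-2$ we have the resonance $s=-2+0=-5+3=-8+6$, so $r_s\geq3$. Define
$$u=\psi_2e_1+\psi_5e_2+\psi_8e_3.$$
Then $(\Deltavec+A^*)u=-2u$, so $u\in\ker(s-(\Deltavec+A^*))$. Moreover $B^*1_{\gamma}\dn u=1_{\gamma}\dn(\psi_2+\psi_5+\psi_8)=0$, and $u\neq0$. By (\ref{HT}) (equivalently, the second condition of Theorem~\ref{prop B vect} fails), the system is not approximately controllable.

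\emph{Main obstacle.} The only delicate point is Step 1: finding a nonzero combination across distinct eigenvalues that is annihilated on both faces. A single eigenspace never works, by (\ref{HT diff}). The key is that the two faces give two families of linear constraints, which can be simultaneously satisfied once we mix the $2\times2$ block of modes $p,q\in\ens{1,2}$.

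The construction actually uses only three coupled components; the remaining components are uncoupled and harmless, which covers every $n\geq4$ as stated.
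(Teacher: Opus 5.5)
Your proof is correct, and I checked the computations: the four boundary equations, the solution $c_{22}=1$, $c_{12}=-2$, $c_{21}=2$, $c_{11}=-4$, the resonance $-2+0=-5+3=-8+6$, and $B^*\dn u=\dn(\psi_2+\psi_5+\psi_8)=0$ on $\gamma_L\cup\gamma_T$. The strategy is the paper's. On the square $(0,\pi)^2$ with one control, you take a resonant $s$ and a nonzero sum of Dirichlet eigenfunctions from the resonating eigenspaces whose normal derivative vanishes on both faces. This violates (\ref{HT}), equivalently the second condition of Theorem \ref{prop B vect}.

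The real difference is the choice of modes.

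\emph{The paper's choice.} It uses $(p,q)\in\ens{1,2}\times\ens{1,3}$, i.e.\ the product $(\phi_1-\tfrac12\phi_2)(x_1)\,(\phi_1-\tfrac13\phi_3)(x_2)$. Its four modes lie in four distinct eigenspaces ($\Lambda=2,5,10,13$). So it needs four distinct resonating eigenvalues of $A^*$, which it realizes with a companion-type $4\times4$ matrix and $B=e_1$.

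\emph{Your choice.} Your $\psi$ is also a tensor product, $(\sin{2x_1}-2\sin{x_1})(2\sin{x_2}+\sin{2x_2})$, with each factor having zero derivative at the relevant endpoint. Because you take $p,q\in\ens{1,2}$, the two cross modes share the double eigenvalue $\Lambda_{1,2}=\Lambda_{2,1}=5$. Only three eigenspaces are involved, so three resonating eigenvalues of $A^*$ suffice.

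\emph{What this buys.} The diagonal choice $A=\mathrm{diag}(0,3,6,\ldots)$, $B=(1,\ldots,1)^T$ makes verification immediate. It loses no generality: any $A$ with simple spectrum can be diagonalized, and under the algebraic Hautus condition the transformed $B$ has nonzero entries that can be rescaled to $1$. More importantly, your construction shows that approximate controllability can already fail for $n=3$. The paper leaves that case open between Theorem \ref{prop rect 2} ($n=2$) and Theorem \ref{prop rect 3} ($n\geq 4$). Your result therefore contains the stated one and is slightly sharper.
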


\begin{remark}
It is worth mentioning that, in all the previous statements, we can replace $\gamma_L$ (resp. $\gamma_R$, $\gamma_T$, $\gamma_B$) by a nonempty open part of it.
This is easily seen in the following proofs by using the analyticity of the 1D eigenfunctions of $\Delta$.
\end{remark}

The main ingredient that will make the proofs work is the following.
The (not necessarily distinct) eigenvalues of $\Delta=\Delta_{x_1}+\Delta_{x_2}$ on $(0,X_1) \times (0,X_2)$ are
$$
-\Lambda_{p,q}=-\lambda_p^{X_1}-\lambda_q^{X_2},
$$
and the corresponding eigenfunctions are
$$\Phi_{p,q}(x,y)=\phi_p^{X_1}(x)\phi_q^{X_2}(y), \quad x \in (0,X_1), y \in (0,X_2).$$

In this case the dimension $m_l$ of the eigenspace of $\Delta$ associated with $-\lambda_l$ is exactly the number of distinct couples of indices $(p,q)$ such that $\Lambda_{p,q}=\lambda_l$.
We denote by $(p_l^1,q_l^1), \ldots,$ $(p_l^{m_l},q_l^{m_l})$ all such indices.
Note that for every $m$ we necessarily have
\begin{equation}\label{q diff}
p_l^m \neq p_l^{m'} \quad \mbox{and} \quad q_l^m \neq q_l^{m'}, \quad \forall m' \neq m.
\end{equation}
Indeed, it follows from the definition of these indices and the form of $\Lambda_{p,q}$ that, if $p_l^m = p_l^{m'}$, then we also have $q_l^m=q_l^{m'}$, which is excluded by definition.

\begin{proof}[Proof of Theorem \ref{prop rect 1}]
Let us consider the case $\gamma=\gamma_L$; the proof for $\gamma=\gamma_L\cup \gamma_R$ relies on the same kind of arguments.
We also only prove that, if system (\ref{syst rect 1D}) is approximately controllable, then so is system (\ref{syst 1}), the converse being easier.

Let $s \in \spec{\Deltavec+A^*}$ and $u \in \ker(s-(\Deltavec+A^*)) \cap \ker (B^*1_{\gamma_L}\dn)$.
With the notations previously introduced $u$ then writes
$$
u=\sum_{k=1}^{r} \sum_{m=1}^{m_{l_k}} \left(\sum_{j} \alpha_{k,j,m}w_{i_k,j}\right) \phi_{p_{l_k}^m}^{X_1}\phi_{q_{l_k}^m}^{X_2},
$$
for some $\alpha_{k,j,m} \in \C$, and we have
\begin{equation}\label{dn upu}
\sum_{k=1}^{r} \sum_{m=1}^{m_{l_k}} \beta_{k,m} \phi_{q_{l_k}^m}^{X_2}(y)=0, \quad \forall y \in (0,X_2),
\end{equation}
where we have set
$$\beta_{k,m}=-\gamma_{k,m} \left(\phi_{p_{l_k}^m}^{X_1}\right)'(0), \quad \gamma_{k,m}=B^*\left(\sum_{j} \alpha_{k,j,m} w_{i_k,j}\right).$$

Note that we can always assume that $\ker(\theta_i-A^*) \cap \ker B^* =\ens{0}$ for every $i$ since it is a necessary condition (see Lemma \ref{lemma cond nec}) for both systems (\ref{syst 1}) and (\ref{syst rect 1D}).
As a result, to prove that $u=0$ it is equivalent to show that $\gamma_{k,m}=0$ (or $\beta_{k,m}=0$) for every $k,m$.

For convenience we assume that $r=2$.
Thus (\ref{dn upu}) becomes
$$
\sum_{m=1}^{m_{l_1}} \beta_{1,m} \phi_{q_{l_1}^m}^{X_2}(y)+ \sum_{m=1}^{m_{l_2}} \beta_{2,m} \phi_{q_{l_2}^m}^{X_2}(y)=0, \quad \forall y \in (0,X_2).
$$

Using the linear independence of $\ens{\phi_q^{X_2}}_q$ in $L^2(0,X_2)$, two cases may happen.
For some given $m$, if $q_{l_1}^m \neq q_{l_2}^{m'}$ for every $m'$ then, taking also (\ref{q diff}) into account, we obtain $\beta_{1,m}=0$.
On the other hand, if there exists $m'$ such that $q_{l_1}^m = q_{l_2}^{m'}$, then this $m'$ is unique thanks to (\ref{q diff}) and we obtain that $\beta_{1,m}+\beta_{2,m'}=0$, that is
$$
-\left(\gamma_{1,m}\phi_{p_{l_1}^m}^{X_1}+\gamma_{2,m'} \phi_{p_{l_2}^{m'}}^{X_1}\right)'(0)=0.
$$
Since $q_{l_1}^m = q_{l_2}^{m'}$ we have
$$-\lambda_{p_{l_1}^m}^{X_1}+\theta_{i_1}=-\lambda_{p_{l_2}^{m'}}^{X_1}+\theta_{i_2},$$
and the assumption that system (\ref{syst rect 1D}) is approximately controllable permits to conclude that $\gamma_{1,m}=\gamma_{2,m'}=0$.

Thus, in both situations $\gamma_{1,m}=0$, and it follows that $\gamma_{2,m}=0$ (when $r>2$ we reason by induction).
\end{proof}

\begin{proof}[Proof of Theorem \ref{prop rect 2}]
Since we assume that $n=2$, the matrix $A^*$ has at most two distinct eigenvalues.
If $A^*$ has only one eigenvalue then we already know that the system is approximately controllable, see Remark \ref{rem 1vp} in section \ref{sect cond suffi syst1}.
Let us then assume that $A^*$ has two distinct eigenvalues
\begin{equation}\label{theta diff}
\theta_{i_1} \neq \theta_{i_2}.
\end{equation}

With the same notations as in the proof of Theorem \ref{prop rect 1}, let us show that is it not possible to have
$$
\left\{\begin{array}{l}
\ds \sum_{m=1}^{m_{l_1}} \beta_{1,m} \phi_{q_{l_1}^m}^{X_2}+ \sum_{m=1}^{m_{l_2}} \beta_{2,m} \phi_{q_{l_2}^m}^{X_2}=0, \\
\ds \sum_{m=1}^{m_{l_1}} \delta_{1,m} \phi_{p_{l_1}^m}^{X_1}+ \sum_{m=1}^{m_{l_2}} \delta_{2,m} \phi_{p_{l_2}^m}^{X_1}=0, \\
\end{array}\right.
$$
with $\gamma_{k,m} \neq 0$ for every $k,m$, where
$$\delta_{k,m}=\gamma_{k,m} \left(\phi_{q_{l_k}^m}^{X_2}\right)'(X_2).$$

From the first equation we see that the sets $\ens{q_{l_1}^m}_{1 \leq m \leq m_{l_1}}$ and $\ens{q_{l_2}^{m'}}_{1 \leq m' \leq m_{l_2}}$ are in bijection.
Indeed, if there exists $m$ such that $q_{l_1}^m \neq q_{l_2}^{m'}$ for every $m'$ then, using the linear independence of $\ens{\phi_q^{X_2}}_q$ in $L^2(0,X_2)$, we obtain $\beta_{1,m}=\gamma_{1,m}=0$.
Since the same fact holds for the second equation, the sets $\ens{p_{l_1}^m}_{1 \leq m \leq m_{l_1}}$ and $\ens{p_{l_2}^{m'}}_{1 \leq m' \leq m_{l_2}}$ are also in bijection.

As a consequence, denoting $M=m_{l_1} =m_{l_2}$, we have
$$\sum_{m=1}^{M}\lambda_{q_{l_1}^m}^{X_2}= \sum_{m'=1}^{M}  \lambda_{q_{l_2}^{m'}}^{X_2}, \quad \sum_{m=1}^{M}  \lambda_{p_{l_1}^m}^{X_1} =\sum_{m'=1}^{M}  \lambda_{p_{l_2}^{m'}}^{X_1},$$
so that
$$\sum_{m=1}^{M} \Lambda_{p_{l_1}^m,q_{l_1}^m}=\sum_{m'=1}^{M} \Lambda_{q_{l_2}^{m'},q_{l_2}^{m'}}.$$

Let us denote by $S$ this common value.
Since $-\Lambda_{p_{l_1}^m,q_{l_1}^m}+\theta_{i_1}=-\Lambda_{q_{l_2}^{m},q_{l_2}^{m}}+\theta_{i_2}$ for every $m$, if we sum we obtain
$$-S+M \theta_{i_1}=-S+M\theta_{i_2},$$
and thus
$$\theta_{i_1}=\theta_{i_2},$$
a contradiction with our assumption (\ref{theta diff}).
\end{proof}

\begin{proof}[Proof of Theorem \ref{prop rect 3}]
We provide an example of system with 4 equations for which the condition $\ker(\theta_i-A^*) \cap \ker B^* =\ens{0}$ holds for every $i$ and that is not approximately controllable on $\gamma=\gamma_L \cup \gamma_T$.
This example can easily be generalized to the case $n>4$.

We take $X_1=X_2=\pi$, so that the eigenvalues of $\Delta$ are simply
$$-\Lambda_{p,q}=-p^2-q^2,$$
and we choose
$$A=
\left(\begin{array}{cccc}
0 & 0 & 0 & 0 \\
1 & 0 & 0 & 120 \\
0 & 1 & 0 & -1 \\
0 & 0 & 1&  -10
\end{array} \right),
\quad
B= \left(\begin{array}{c} 1 \\ 0 \\ 0 \\ 0 \end{array}\right).
$$

We can check that
$$\spec{A^*}=\ens{\theta_1=-8,\theta_2=-5,\theta_3=3,\theta_4=0}.$$
$$\ker(\theta_i-A^*) \cap \ker B^*=\ens{0}, \quad i=1,2,3,4.$$

Now, observe that we have the relation
$$-\Lambda_{1,1}+\theta_1=-\Lambda_{2,1}+\theta_2=-\Lambda_{2,3}+\theta_3=-\Lambda_{1,3}+\theta_4=-10.$$

In view of this relation we define
$$u= \phi_{1,1}-\frac{1}{2}\phi_{2,1}+\frac{1}{6}\phi_{2,3}-\frac{1}{3}\phi_{1,3}.$$
Clearly $u \neq 0$.
Let us show that however $\dn u =0$ on $\gamma_L \cup \gamma_T$.
This will contradict Theorem \ref{prop B vect}.

Taking into account that $\left(\phi_p^{X_1}\right)'(0)= p\sqrt{\frac{2}{\pi}}$, for $x_2 \in (0,\pi)$ we have
\begin{multline*}
-\partial_{x_1} u(0,x_2) =-\underbrace{\left(\left(\phi_1^{X_1}\right)'(0)-\frac{1}{2}\left(\phi_2^{X_1}\right)'(0)\right)}_{=0} \phi_1^{X_2}(x_2) \\
-\underbrace{\left(\frac{1}{6}\left(\phi_2^{X_1}\right)'(0)-\frac{1}{3}\left(\phi_1^{X_1}\right)'(0) \right)}_{=0} \phi_3^{X_2}(x_2),
\end{multline*}
so that indeed $\dn u=0$ on $\gamma_L$.

In the same way, for $x_1 \in (0,\pi)$ we have
\begin{multline*}
\partial_{x_2} u(x_1,\pi)=\phi_1^{X_1}(x_1) \underbrace{\left(\left(\phi_1^{X_2}\right)'(\pi)-\frac{1}{3}\left(\phi_3^{X_2}\right)'(\pi)\right)}_{=0} \\
+\phi_2^{X_1}(x_1) \underbrace{\left(-\frac{1}{2}\left(\phi_1^{X_2}\right)'(\pi)+\frac{1}{6}\left(\phi_3^{X_2}\right)'(\pi)\right)}_{=0},
\end{multline*}
and thus $\dn u=0$ also on $\gamma_T$.
\end{proof}

\section{Results for the second system}\label{result syst2}

We now turn out to the results concerning the second system
$$
\left\{\begin{array}{ll}
\ds \pt y_1 =  \Delta y_1 &\mbox{ in } (0,T) \times \Omega. \\
\ds \pt y_2 =  \Delta y_2 + G(x) \cdot \nabla y_1 + a(x) y_1&\mbox{ in } (0,T) \times \Omega. \\
y_1 = 1_{\gamma}g, \quad y_2=0 &\mbox{ on } (0,T) \times \partial\Omega. \\
y_1(0)= y_{1,0}, \quad y_2(0)=y_{2,0} & \mbox{ in } \Omega.
\end{array}\right.
$$

As mentionned in the introduction, it is known that this system is approximately controllable at time $T$ if and only if its adjoint system
$$
\left\{\begin{array}{ll}
\ds \pt z_1 =  \Delta z_1 -G(x) \cdot \nabla z_2 + (a(x)-\dive{G}(x))z_2 &\mbox{ in } (0,T) \times \Omega. \\
\ds \pt z_2 =  \Delta z_2 &\mbox{ in } (0,T) \times \Omega. \\
z_1 = 0, \quad z_2=0 &\mbox{ on } (0,T) \times \partial\Omega. \\
z_1(0)= z_{1,0}, \quad z_2(0)=z_{2,0} & \mbox{ in } \Omega.
\end{array}\right.
$$
has the unique continuation property
$$
\forall  z_{1,0}, z_{2,0} \in H^1_0(\Omega), \quad
\bigg( 1_{\gamma}\dn z_1(t)=0 \mbox{ for a.e. } t \in (0,T) \bigg)
\Longrightarrow z_{1,0}=z_{2,0}=0.
$$
For commodity, let us denote
$$\opQ=-G(x) \cdot \nabla+ (a(x)-\dive{G}(x)).$$
Thus, we want to apply Theorem \ref{fat} to the operators
$$
\opA=\left(\begin{array}{cc} \Delta & \opQ \\  0 & \Delta \end{array}\right), \quad \dom{\opA}=H^2(\Omega)^2 \cap H^1_0(\Omega)^2,
$$
and
$$
\opC=\left(\begin{array}{cc} 1_{\gamma}\dn & 0 \end{array}\right), \quad \dom{\opC}=H^2(\Omega)^2 \cap H^1_0(\Omega)^2.
$$
Again, by using a perturbation argument we can check that $\opA$ generates an analytic semigroup and indeed satisfies the hypothesis of Theorem \ref{fat}.
The operator $\opC$ is the same as for the first system we studied, taking $n=2$ and $B=\left(\begin{array}{c} 1 \\ 0 \end{array}\right)$.

As a consequence, this system is approximately controllable if and only if
\begin{equation}\label{fat syst2}
\ker \left(\begin{array}{cc} s-\Delta & -\opQ \\  0 & s-\Delta \end{array}\right) \cap \ker(\begin{array}{cc} 1_{\gamma}\dn & 0 \end{array})=\ens{0}, \quad \forall s \in \C.
\end{equation}

It is not difficult to see that the spectrum of $\opA$ is
$$\spec{\opA}=\ens{-\lambda_l}_{l},$$
(we refer to the introduction for the notations) and that its eigenspaces can be decomposed as follows:
$$\ker(-\lambda_l-\opA)=U_l \oplus V_l,$$
with
$$
U_l=\ens{\left(\begin{array}{c} u \\ 0 \end{array}\right)}_{u \in \ker(-\lambda_l-\Delta)}, \quad
V_l=\ens{\left(\begin{array}{c} \opS_l \opQ v \\ v \end{array}\right)}_{v \in \ker(-\lambda_l -\Delta) \cap \ker \opPl \opQ},
$$
where $\opS_l: f \in \ker \opPl \longmapsto u \in \ker \opPl$ with $u$ the unique solution (in $\ker \opPl$) of the equation $(-\lambda_l-\Delta) u =f$.

\subsection{A sufficient condition}

The following theorem is, in some sense, the analogue of Theorem \ref{prop cond suffi} in section \ref{sect cond suffi syst1}.
This also recovers \cite[Theorem 1.5]{KdT}.

\begin{theorem}\label{prop cond suffi syst2}
Assume that
\begin{equation}\label{cond suffi syst2}
\ker(-\lambda_l -\Delta) \cap \ker \opPl \opQ=\ens{0}, \quad \forall l.
\end{equation}
Then, the ND system
$$
\left\{\begin{array}{ll}
\ds \pt y_1 =  \Delta y_1 &\mbox{ in } (0,T) \times \Omega. \\
\ds \pt y_2 =  \Delta y_2 + G(x) \cdot \nabla y_1 + a(x) y_1&\mbox{ in } (0,T) \times \Omega. \\
y_1 = 1_{\gamma}g, \quad y_2=0 &\mbox{ on } (0,T) \times \partial\Omega. \\
y_1(0)= y_{1,0}, \quad y_2(0)=y_{2,0} & \mbox{ in } \Omega.
\end{array}\right.
$$
is approximately controllable.
\end{theorem}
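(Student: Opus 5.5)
We use the characterization (\ref{fat syst2}), obtained by applying Theorem \ref{fat} to $\opA$ and $\opC$. Since $\spec{\opA}=\ens{-\lambda_l}_l$, it suffices to fix $l$ and show that
$$\ker(-\lambda_l-\opA)\cap\ker\left(\begin{array}{cc} 1_{\gamma}\dn & 0 \end{array}\right)=\ens{0}.$$
For $s\notin\spec{\opA}$ the kernel is already trivial.

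The key step is to use the decomposition $\ker(-\lambda_l-\opA)=U_l\oplus V_l$ recalled above. Assumption (\ref{cond suffi syst2}) says precisely that $\ker(-\lambda_l-\Delta)\cap\ker\opPl\opQ=\ens{0}$. Hence $V_l=\ens{0}$ and the eigenspace reduces to $U_l$.

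It is worth checking this decomposition directly, since it carries the whole argument. Take an eigenvector $(z_1,z_2)$ associated with $-\lambda_l$. The second line gives $(-\lambda_l-\Delta)z_2=0$, so $z_2\in\ker(-\lambda_l-\Delta)$. The first line gives $(-\lambda_l-\Delta)z_1=\opQ z_2$. Since $\Delta$ is self-adjoint with Dirichlet conditions, the range of $-\lambda_l-\Delta$ is orthogonal to $\ker(-\lambda_l-\Delta)$. Therefore $\opPl\opQ z_2=0$, and by (\ref{cond suffi syst2}) this forces $z_2=0$. Then $z_1\in\ker(-\lambda_l-\Delta)$, so $(z_1,z_2)\in U_l$.

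Now let $(u,0)\in U_l$ satisfy $1_{\gamma}\dn u=0$, where $u\in\ker(-\lambda_l-\Delta)$. By (\ref{HT diff}), which always holds (see \cite[Lemma]{MMS}), we get $u=0$. Thus (\ref{fat syst2}) holds for every $s\in\C$, and the system is approximately controllable by Remark \ref{semigpe ana}, for every $T>0$.

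There is no serious obstacle. The only delicate points are the Fredholm-alternative orthogonality argument above, and the validity of the hypotheses of Theorem \ref{fat} for $\opA$, which the paper has already asserted via a perturbation argument.
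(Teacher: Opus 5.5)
Your proposal is correct and is essentially the paper's proof: hypothesis (\ref{cond suffi syst2}) forces $V_l=\ens{0}$, so every eigenvector of $\opA$ for $-\lambda_l$ has the form $(u,0)$ with $u\in\ker(-\lambda_l-\Delta)$, and (\ref{HT diff}) then gives $u=0$. Your check that $\opPl\opQ z_2=0$, using that the range of $-\lambda_l-\Delta$ is orthogonal to its kernel, just makes explicit the step the paper leaves as ``not difficult to see''.
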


\begin{remark}
Condition (\ref{cond suffi syst2}) can be reformulated into the following rank condition:
\begin{equation}\label{rank cond syst2}
\rank \left(\begin{array}{ccc} \ps{\opQ\phi_{l,1}}{\phi_{l,1}}{L^2(\Omega)} & \cdots & \ps{\opQ\phi_{l,1}}{\phi_{l,m_l}}{L^2(\Omega)} \\ \vdots & & \vdots \\ \ps{\opQ\phi_{l,m_l}}{\phi_{l,1}}{L^2(\Omega)} & \cdots & \ps{\opQ\phi_{l,m_l}}{\phi_{l,m_l}}{L^2(\Omega)} \end{array} \right)=m_l, \quad \forall l.
\end{equation}
\end{remark}

\begin{proof}[Proof of Theorem \ref{prop cond suffi syst2}]
The assumption (\ref{cond suffi syst2}) means that $V_l=\ens{0}$ for every $l$, so that $\ker(-\lambda_l-\opA)=U_l$ for every $l$.
Thus, any $w \in \ker(-\lambda_l-\opA)\cap \ker \opC$ writes $w=\left(\begin{array}{c} u \\ 0 \end{array}\right)$ for some $u \in \ker(-\lambda_l-\Delta)$ and satisfies
$$Cw=1_{\gamma}\dn u =0.$$
This implies $u=0$ (see (\ref{HT diff})) and thus also $w=0$.
\end{proof}

\subsection{The 1D case}\label{sect 1D syst2}

As for Theorem \ref{1D B vect} in section \ref{sect B vect}, condition (\ref{cond suffi syst2}) turns out to be also necessary in 1D:

\begin{theorem}\label{prop 1D syst2}
The 1D system
\begin{equation}\label{syst2 1D}
\left\{\begin{array}{ll}
\ds \pt y_1 =  \Delta y_1 &\mbox{ in } (0,T) \times (0,L). \\
\ds \pt y_2 =  \Delta y_2 +G(x) \cdot \nabla y_1 + a(x) y_1&\mbox{ in } (0,T) \times (0,L). \\
y_1 = 1_{\ens{0}}g, \quad y_2=0 &\mbox{ on } (0,T) \times \ens{0,L}. \\
y_1(0)= y_{1,0}, \quad y_2(0)=y_{2,0} & \mbox{ in } (0,L).
\end{array}\right.
\end{equation}
is approximately controllable if and only if
\begin{equation}\label{caract syst2 1D}
\int_{0}^{L} \left(-\frac{1}{2}G'(x)+a(x)\right) \abs{\phi_l(x)}^2 dx \neq 0, \quad \forall l.
\end{equation}
\end{theorem}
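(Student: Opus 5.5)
We reduce everything to the Fattorini characterization (\ref{fat syst2}) and exploit that in 1D every eigenspace of $\Delta$ is one-dimensional, spanned by $\phi_l$.

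\textbf{Step 1: rewrite condition (\ref{cond suffi syst2}) in 1D.} Since $m_l=1$, the rank condition (\ref{rank cond syst2}) reads $\ps{\opQ\phi_l}{\phi_l}{L^2(0,L)}\neq 0$. Here $\phi_l$ can be taken real and $\opQ=-G\,\frac{d}{dx}+(a-G')$. We compute
$$\ps{\opQ\phi_l}{\phi_l}{L^2(0,L)}=-\int_0^L G\phi_l'\phi_l\,dx+\int_0^L (a-G')\phi_l^2\,dx .$$
Integrating by parts in the first term, using $G\phi_l\phi_l'=\frac12 G(\phi_l^2)'$ and $\phi_l(0)=\phi_l(L)=0$, gives $\frac12\int_0^L G'\phi_l^2\,dx$. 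Hence
$$\ps{\opQ\phi_l}{\phi_l}{L^2(0,L)}=\int_0^L\left(-\tfrac12 G'+a\right)\abs{\phi_l}^2\,dx,$$
so (\ref{caract syst2 1D}) is exactly (\ref{cond suffi syst2}) in 1D. Theorem \ref{prop cond suffi syst2} then gives sufficiency immediately.

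\textbf{Step 2: necessity.} Suppose the integral vanishes for some $l$. Then $\opPl\opQ\phi_l=0$, so $V_l\neq\ens{0}$ and it contains
$$w=\left(\begin{array}{c}\opS_l\opQ\phi_l\\ \phi_l\end{array}\right).$$
Since $U_l$ is spanned by $(\phi_l,0)^T$, every element of $\ker(-\lambda_l-\opA)=U_l\oplus V_l$ has the form $c(\phi_l,0)^T+d\,w$. We choose $c,d$ with $d=1$ so that the first component has zero derivative at $0$:
$$c\,\phi_l'(0)+(\opS_l\opQ\phi_l)'(0)=0 .$$
This is solvable because $\phi_l'(0)\neq0$; otherwise $\phi_l$ would solve a second-order ODE with zero Cauchy data at $0$ and vanish identically. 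The resulting vector is nonzero, since its second component is $\phi_l$. It lies in $\ker(-\lambda_l-\opA)\cap\ker\opC$, so (\ref{fat syst2}) fails and the system is not approximately controllable.

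\textbf{Main obstacle.} The only delicate points are regularity and the decomposition. One must check that $\opS_l\opQ\phi_l\in H^2\cap H^1_0$ so that its normal derivative at $0$ makes sense. This holds because $G\in W^{1,\infty}$ gives $\opQ\phi_l\in L^2$, and elliptic regularity then applies. One must also note that, in the single-control 1D setting, the trace $1_{\ens{0}}\dn$ is a single scalar. A single scalar equation can always be cancelled by the one free parameter $c$, and this is what makes the condition also necessary.
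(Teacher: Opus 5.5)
Your proposal is correct and follows essentially the same route as the paper: you identify (\ref{caract syst2 1D}) with $\ps{\opQ\phi_l}{\phi_l}{L^2(0,L)}\neq 0$ by integration by parts, invoke Theorem \ref{prop cond suffi syst2} for sufficiency, and for necessity combine $(\phi_l,0)\in U_l$ with $(\opS_l\opQ\phi_l,\phi_l)\in V_l$ so that the first component has zero derivative at $0$. Your normalization $d=1$, $c=-(\opS_l\opQ\phi_l)'(0)/\phi_l'(0)$ simply absorbs the paper's case split on whether $(\opS_l\opQ v)'(0)$ vanishes.
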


\begin{remark}\label{rem syst2 au bord}
This theorem provides an easy condition to check whether this system is approximately controllable or not.
For instance, for $a=0$ and $G$ constant, the corresponding system is not approximately controllable.
\end{remark}

\begin{proof}[Proof of Theorem \ref{prop 1D syst2}]
Since $m_l=1$ for every $l$ ($N=1$), condition (\ref{rank cond syst2}) now reads as
$$\ps{Q\phi_l}{\phi_l}{L^2(\Omega)} \neq 0, \quad \forall l,$$
which gives the same condition than (\ref{caract syst2 1D}) after an integration by part on the gradient term.

From Theorem \ref{prop cond suffi syst2} we already know that this condition is sufficient.
Let us prove that it is also necessary in our case.

Assume that this condition does not hold or, equivalently, that (\ref{cond suffi syst2}) does not hold.
Then, for some $l$, there exists at least one eigenfunction of $\opA$ associated with $-\lambda_l$ in $U_l$, say $\left(\begin{array}{c} u \\ 0 \end{array}\right)$, and another one in $V_l$, say $\left(\begin{array}{c} \opS_l\opQ v \\ v \end{array}\right)$.
If $(\opS_l\opQ v)'(0)=0$ then condition (\ref{fat syst2}) already fails.
On the other hand, if $(\opS_l\opQ v)'(0) \neq 0$, then condition (\ref{fat syst2}) also fails because of the following relation
$$\left(\frac{1}{u'(0)} u-\frac{1}{\left(\opS_l\opQ v\right)'(0)} \opS_l\opQ v \right)'(0) =0.$$
\end{proof}

\section{Further results: distributed controllability}\label{article 2/further results}

All along this work we were interested in the boundary controllability problem but let us mention that the method also works for distributed controllability.
For instance, we can recover the result of \cite{AKBDGB} concerning system (\ref{syst 1}).
We can also obtain the following result:

\begin{theorem}\label{prop distri}
Let $\omega$ be a nonempty open subset of $\Omega$.
Assume that $\Omega$ is connected and $G$ and $a$ are real analytic functions in $\Omega$.
Then, the ND system
$$
\left\{\begin{array}{ll}
\ds \pt y_1 =  \Delta y_1 +1_{\omega}g &\mbox{ in } (0,T) \times \Omega. \\
\ds \pt y_2 =  \Delta y_2 + G(x) \cdot \nabla y_1 + a(x) y_1&\mbox{ in } (0,T) \times \Omega. \\
y_1 =0, \quad y_2=0 &\mbox{ on } (0,T) \times \partial\Omega. \\
y_1(0)= y_{1,0}, \quad y_2(0)=y_{2,0} & \mbox{ in } \Omega.
\end{array}\right.
$$
is approximately controllable if and only if
\begin{equation}\label{hyp Q}
\ker(-\lambda_l-\Delta) \cap \ker \opQ=\ens{0}, \quad \forall l,
\end{equation}
where we recall that $\opQ=-G(x) \cdot \nabla+ (a(x)-\dive{G}(x))$.
\end{theorem}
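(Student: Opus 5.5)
We apply Theorem \ref{fat} exactly as in Section \ref{result syst2}, keeping $\opA$ but replacing the observation operator by the distributed one $\opC=\left(\begin{array}{cc} 1_{\omega} & 0 \end{array}\right)$. This operator is bounded on $L^2(\Omega)^2$, hence relatively bounded with respect to $\opA$. By Remark \ref{semigpe ana}, the system is then approximately controllable if and only if
$$\ker(-\lambda_l-\opA)\cap\ker\left(\begin{array}{cc} 1_{\omega} & 0 \end{array}\right)=\ens{0}, \quad \forall l.$$
We use the decomposition $\ker(-\lambda_l-\opA)=U_l\oplus V_l$ given earlier. An element of this kernel has the form $w=\left(\begin{array}{c} u+\opS_l\opQ v \\ v \end{array}\right)$ with $u\in\ker(-\lambda_l-\Delta)$ and $v\in\ker(-\lambda_l-\Delta)\cap\ker\opPl\opQ$. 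Equivalently, $w=(w_1,w_2)$ solves $(-\lambda_l-\Delta)w_1=\opQ w_2$ and $(-\lambda_l-\Delta)w_2=0$, with $w_1,w_2\in H^2\cap H^1_0$. The observation condition reads $w_1=0$ on $\omega$.

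\textbf{Sufficiency.} Assume (\ref{hyp Q}) and suppose $w_1=0$ on $\omega$. Applying $-\lambda_l-\Delta$ on $\omega$ gives $\opQ w_2=0$ on $\omega$. The key point is analyticity. Since $w_2$ is a Dirichlet eigenfunction of $\Delta$, it is real analytic in $\Omega$. Since $G$ and $a$ are real analytic, so is $\opQ w_2=-G\cdot\nabla w_2+(a-\dive{G})w_2$. This function vanishes on the nonempty open set $\omega$, and $\Omega$ is connected, so $\opQ w_2=0$ in all of $\Omega$. Thus $w_2\in\ker(-\lambda_l-\Delta)\cap\ker\opQ$, and (\ref{hyp Q}) gives $w_2=0$. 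Then $w_1$ is itself an eigenfunction of $\Delta$ vanishing on $\omega$, and unique continuation (again analyticity of eigenfunctions) gives $w_1=0$.

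\textbf{Necessity.} Assume some nonzero $v\in\ker(-\lambda_l-\Delta)$ satisfies $\opQ v=0$. In particular $\opPl\opQ v=0$, so $v$ is admissible in $V_l$, and $\opS_l\opQ v=\opS_l 0=0$. Hence $w=\left(\begin{array}{c} 0 \\ v \end{array}\right)$ lies in $\ker(-\lambda_l-\opA)$, is nonzero, and satisfies $\opC w=0$. Theorem \ref{fat} then shows that the system is not approximately controllable. (Directly: $w_1=0$ and $(-\lambda_l-\Delta)w_1=0=\opQ v$.) One should also note that the hypotheses of Theorem \ref{fat} on $\opA$ were already verified in Section \ref{result syst2}.

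\textbf{Main obstacle.} The delicate step is the analytic continuation of $\opQ w_2=0$ from $\omega$ to $\Omega$. This is exactly where real analyticity of $G$, $a$ (hence of $\dive{G}$) and connectedness of $\Omega$ are needed, together with interior analyticity of Laplacian eigenfunctions. Without analyticity, $\opQ w_2$ could vanish on $\omega$ without vanishing globally, and the argument would break down.
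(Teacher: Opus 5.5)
Your proof is correct, and your necessity argument (the nonzero vector $(0,v)$ with $\opQ v=0$) is the same as the paper's.

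For sufficiency you take a somewhat different route. The paper keeps the decomposition $w_1=u+\opS_l\opQ v$. It argues that $\opS_l\opQ v$ is real analytic as the solution of an inhomogeneous elliptic equation with analytic right-hand side $\opQ v$, invoking analytic regularity (H\"ormander). It then concludes $w_1\equiv 0$ on $\Omega$ by unique continuation. Finally it separates the two pieces using $u\in\image{\opPl}$ and $\opS_l\opQ v\in\ker\opPl$, which gives $u=0$, then $\opQ v=0$, and hence $v=0$ by the hypothesis.

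You reverse the order. You apply $-\lambda_l-\Delta$ to $w_1$ locally on $\omega$, which transfers the vanishing to $\opQ w_2$. That function is analytic simply as a combination of $G$, $a$, $\dive{G}$ and the Dirichlet eigenfunction $w_2$. You use the hypothesis to kill $w_2$, and only then apply unique continuation to the eigenfunction $w_1$.

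Your version needs only interior analyticity of Laplace eigenfunctions, not analytic regularity for the inhomogeneous problem. It also does not need the projection $\opPl$ or the operator $\opS_l$. The paper's version stays within the $U_l\oplus V_l$ framework used throughout Section~\ref{result syst2}. Both arguments use the analyticity of $G$, $a$ and the connectedness of $\Omega$ at the corresponding step.
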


To the author knowledge, \cite{BCGdT}, \cite{Gue} and \cite{KdT} are the only works for the distributed null and approximate controllability of this system.
However in these papers, even for the case $a=0$ and $G$ constant, a geometric assumption or a particular form of $G$ is required (see \cite[Theorem 2.1]{BCGdT} and \cite[Theorem 4]{Gue}), while \cite[Theorem 1.5]{KdT} can not be applied.

\begin{proof}[Proof of Theorem \ref{prop distri}]
This time the observation operator is $\opC=\left(\begin{array}{cc} 1_{\omega} & 0 \end{array}\right)$ (it is a bounded operator on $L^2(\Omega)^2$).
Let $w \in \ker(-\lambda_l-\opA) \cap \ker \opC$.
Thus, $w$ writes $w=\left(\begin{array}{c} u \\ 0 \end{array}\right) + \left(\begin{array}{c} \opS_l \opQ v \\ v \end{array}\right)$ for some $u \in \ker(-\lambda_l-\Delta)$ and $v \in \ker(-\lambda_l -\Delta) \cap \ker \opPl \opQ$, and satisfies
\begin{equation}\label{condition}
1_{\omega} \left(u + \opS_l \opQ v\right)=0.
\end{equation}
Since $v$ is an analytic function, so is $\opQ v$.
Thus, $\opS_l \opQ v$ is an analytic function as solution of an elliptic partial differential equation with analytic data (see for instance \cite[Theorem 7.5.1]{Hor}).
Note that $u$ is also analytic.
Thus, (\ref{condition}) is equivalent to
$$u + \opS_l \opQ v=0.$$
This implies that $u=0$ since $u=-\opS_l \opQ v \in \ker\opPl$ and $u \in \ker(-\lambda_l-\Delta)=\image{\opPl}$.
Thus, $\opS_l \opQ v=-u=0$ and it follows that $\opQ v=0$ (see the definition of $\opS_l$).
This implies $v=0$ if and only if the hypothesis holds.
\end{proof}

Let us illustrate this result with $a=0$ and $G \neq 0$ constant. This means that we take $\opQ=-G \cdot \nabla$.
We can verify that this $\opQ$ satisfies (\ref{hyp Q}) since
$$\forall u \in H^1_0(\Omega), \quad G \cdot \nabla u = 0 \mbox{ in } \Omega \Longrightarrow u=0.$$
Indeed, set $v(x)=e^{G \cdot x} u(x)$.
We have $v \in H^1_0(\Omega)$ and (using the hypothesis on $u$)
$$G \cdot \nabla v = \abs{G}^2 v.$$ 
Multiplying this equality by $v$ and integrating by parts we obtain
$$\abs{G}^2 \int_{\Omega} \abs{v(x)}^2 \, dx=0.$$
This implies that $v=0$ and thus also $u=0$.

It follows from Theorem \ref{prop distri} that the ND system
$$
\left\{\begin{array}{ll}
\ds \pt y_1 =  \Delta y_1 +1_{\omega}g &\mbox{ in } (0,T) \times \Omega. \\
\ds \pt y_2 =  \Delta y_2 + G \cdot \nabla y_1 &\mbox{ in } (0,T) \times \Omega. \\
y_1 =0, \quad y_2=0 &\mbox{ on } (0,T) \times \partial\Omega. \\
y_1(0)= y_{1,0}, \quad y_2(0)=y_{2,0} & \mbox{ in } \Omega.
\end{array}\right.
$$
is approximately controllable.

\begin{remark}
This establishes another difference between distributed and boundary controllability for parabolic systems.
Indeed, let us recall that we have seen that this same system in 1D with a boundary control is not approximately controllable, see Remark \ref{rem syst2 au bord}.
\end{remark}

\appendix
\section{Proof of Theorem \ref{fat}}\label{annexe 1}

For the sake of completeness we give here the proof of Theorem \ref{fat}.
We recall that this proof is just adapted from the one in \cite{Fat} in order to deal with relatively bounded observation operators.

Let us recall the notations and assumptions.
$H$ and $U$ are complex Hilbert spaces, $\opA : \dom{\opA} \subset H \longrightarrow H$ generates a strongly continuous semigroup on $H$, has a compact resolvent, the system of root vectors of $\opA^*$ is complete in $H$, and $\opC: \dom{\opC} \subset H \longrightarrow U$ is relatively bounded with respect to $\opA$.

We denote by $\rho(\opA)$ the resolvent set of our closed linear operator $\opA$ and, for $\lambda \in \rho(\opA)$, $\res{\lambda}{\opA}=(\lambda-\opA)^{-1}$ the resolvent operator.

Since $\opA$ has a compact resolvent, its spectrum $\spec{\opA}=\C \backslash \rho(\opA)$ consists in a sequence of isolated points, say $\ens{\mu_j}_j$.
In particular $\rho(\opA)$ is path connected.
We have $\spec{\opA^*}=\conj{\spec{\opA}}=\ens{\conj{\mu_j}}_j$.

Let now $C_j \subset \rho(\opA)$ be a positive-oriented small circle enclosing $\mu_j$ and such that no other eigenvalue than $\mu_j$ lies inside this circle.
For every $j$ we define the spectral projection
$$
\begin{array}{rccl}
\proj: & H & \longrightarrow & H \\
& u & \longmapsto &\ds \frac{1}{2\pi i} \int_{C_j} \res{\xi}{\opA}u \, d\xi.
\end{array}
$$
The operator $\proj$ is a bounded linear operator and one can prove that the range of this operator is exactly the root subspace of $\opA$ associated with $\mu_j$, i.e. $\ker(\mu_j-\opA)^{\tau_j}$, where $\tau_j$ is the smallest indice $k$ such that $\ker(\mu_j-\opA)^{k+1}=\ker(\mu_j-\opA)^k$.
For a proof of this fact we refer to \cite[2 Lemma, Chapter XIX]{DS}.
A computation shows that
$$\proj^*= \frac{1}{2\pi i} \int_{\conj{C_j}} \res{\xi}{\opA^*} \, d\xi,$$
where $\conj{C_j}$ is the circle centered in $\conj{\mu_j}$ with the same radius as $C_j$.
Since there are no eigenvalue of $\opA^*$ except $\conj{\mu_j}$ inside the circle $\conj{C_j}$, the range of this operator is exactly the root subspace of $\opA^*$ associated with $\conj{\mu_j}$.

Let us now recall some properties of semigroups.
Since $\opA$ generates a strongly continuous semigroup $\opS(t)$ on $H$, we know that there exists $M>0$ and $\omega_0 \in \R$ such that
$$\norm{\opS(t)}{\mathcal{L}(H)} \leq M e^{\omega_0 t}, \quad \forall t \geq 0.$$
Moreover, for every $z_0 \in \dom{\opA}$, we have $\opS(t)z_0 \in \dom{\opA}$ with $\opA \opS(t)z_0=\opS(t)\opA z_0$ and the map $t \in [0,+\infty) \longmapsto \opS(t)z_0 \in \dom{\opA}$ is continuous.
Finally, the resolvent set $\rho(A)$ contains the halfplane $\ens{\lambda \in \C \, | \, \re \lambda >\omega_0}$.
For a proof of these facts we refer to \cite[Proposition 5.5, Chapter I]{EN}, \cite[Lemma 1.3, Chapter II]{EN} and \cite[Theorem 1.10, Chapter II]{EN}, respectively.

\begin{lemma}[Corollary 2.2 of \cite{Fat}]\label{lem1}
Let $z_0 \in \dom{\opA}$ be fixed.
The three following properties are equivalent:
\begin{enumerate}
\item\label{semi} $\opC\opS(t)z_0=0$ for a.e. $t \in (0,+\infty)$.
\item\label{reso1} $\opC\res{\lambda}{\opA} z_0=0$ for every $\lambda \in \C$ with $\re{\lambda}> \omega_0$.
\item\label{reso2} $\opC\res{\lambda}{\opA} z_0=0$ for every $\lambda \in \rho(\opA)$.
\end{enumerate}
\end{lemma}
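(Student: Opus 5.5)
We prove the cycle of implications (1) $\Rightarrow$ (2) $\Rightarrow$ (3) $\Rightarrow$ (1). The key tool is the Laplace transform representation of the resolvent. For $\re{\lambda}>\omega_0$ and $z_0 \in \dom{\opA}$ we have
$$\res{\lambda}{\opA}z_0=\int_0^{+\infty} e^{-\lambda t}\opS(t)z_0 \, dt,$$
and this integral converges not only in $H$ but also in $\dom{\opA}$ equipped with the graph norm. Indeed, $t \mapsto \opS(t)z_0$ is continuous into $\dom{\opA}$, and
$$\norm{\opA\opS(t)z_0}{H}=\norm{\opS(t)\opA z_0}{H}\leq Me^{\omega_0 t}\norm{\opA z_0}{H}.$$
Since $\opC$ is relatively bounded with respect to $\opA$, it is bounded from $\dom{\opA}$ (graph norm) into $U$. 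Hence it commutes with this Bochner integral:
$$\opC\res{\lambda}{\opA}z_0=\int_0^{+\infty} e^{-\lambda t}\opC\opS(t)z_0\,dt,$$
where $t\mapsto \opC\opS(t)z_0$ is continuous into $U$ and of exponential growth at most $\omega_0$.

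\textbf{(1) $\Rightarrow$ (2).} If $\opC\opS(t)z_0=0$ for a.e. $t$, the integrand above vanishes a.e., so $\opC\res{\lambda}{\opA}z_0=0$ for every $\re{\lambda}>\omega_0$.

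\textbf{(2) $\Rightarrow$ (3).} This is an analytic continuation argument. The map $\lambda\mapsto \res{\lambda}{\opA}z_0$ is holomorphic on $\rho(\opA)$ with values in $\dom{\opA}$ (graph norm). To see this, write
$$\opA\res{\lambda}{\opA}z_0=\lambda\res{\lambda}{\opA}z_0-z_0,$$
and use that $\lambda\mapsto\res{\lambda}{\opA}$ is holomorphic in $\mathcal{L}(H)$. Composing with the bounded map $\opC:\dom{\opA}\to U$, we get that $\lambda \mapsto \opC\res{\lambda}{\opA}z_0$ is holomorphic on $\rho(\opA)$. It vanishes on the open half-plane $\ens{\re\lambda>\omega_0}$, which lies in $\rho(\opA)$. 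Since $\rho(\opA)$ is connected (the spectrum is discrete), the identity theorem gives vanishing on all of $\rho(\opA)$.

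\textbf{(3) $\Rightarrow$ (1).} Restricting (3) to the half-plane gives (2). Then the Laplace transform of the continuous, exponentially bounded function $f(t)=\opC\opS(t)z_0$ vanishes for all $\re\lambda>\omega_0$. By uniqueness of the Laplace transform, $f=0$ a.e. on $(0,+\infty)$. To justify uniqueness for $U$-valued functions, pair with arbitrary $\psi\in U$ to reduce to scalar functions, and then use, for instance, the Post–Widder inversion or the injectivity of the scalar Laplace transform on $L^1_{loc}$ functions with exponential bounds.

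\textbf{Main obstacle.} The only delicate point is justifying that $\opC$ may be passed inside the Laplace integral and inside the holomorphic family, since $\opC$ is unbounded on $H$. This is handled by working throughout in $\dom{\opA}$ with the graph norm, as described above.
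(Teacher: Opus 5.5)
Your proposal is correct and follows essentially the paper's route. You use the Laplace-transform representation of the resolvent, justified in the graph norm of $\dom{\opA}$ so that the relatively bounded $\opC$ can be passed inside the integral; (1)$\Leftrightarrow$(2) then follows from injectivity of the Laplace transform, and (2)$\Leftrightarrow$(3) from analytic continuation on the connected set $\rho(\opA)$. Your explicit argument that $\lambda\mapsto\opC\res{\lambda}{\opA}z_0$ is holomorphic, via $\opA\res{\lambda}{\opA}z_0=\lambda\res{\lambda}{\opA}z_0-z_0$, simply fills in the detail the paper leaves under ``analytic continuation of the resolvent''.
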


\begin{proof}[Proof]
Recall that the resolvent of an operator can be represented as the Laplace transform of the semigroup it generates for $\re{\lambda}>\omega_0$ (see for instance \cite[Theorem 1.10, Chapter II]{EN}):
$$\begin{array}{rcl}
\ds \res{\lambda}{\opA}z_0 &=&\ds  \int_{0}^{+\infty} e^{-\lambda t} \opS(t)z_0 \, dt \\
&=& \ds \lim_{j \to +\infty} \int_{0}^{j} e^{-\lambda t} \opS(t)z_0 \, dt \quad (\mbox{limit in } H).
\end{array}
$$
Actually, this limit can also be considered in the sense of $\dom{\opA}$.
Indeed,
$$\int_{0}^{j} e^{-\lambda t} \opS(t)z_0 \, dt \in \dom{\opA},$$
with
\begin{multline*}
\opA\int_{0}^{j} e^{-\lambda t} \opS(t)z_0 \, dt
=\int_{0}^{j} e^{-\lambda t} \opA\opS(t)z_0 \, dt
= \int_{0}^{j} e^{-\lambda t} \opS(t)\opA z_0 \, dt, \\
\xrightarrow[j \to +\infty]{} \res{\lambda}{\opA}\opA z_0.
\end{multline*}
Since $\opC$ is bounded on $\dom{\opA}$ we obtain
$$\opC\res{\lambda}{\opA}z_0=\int_{0}^{+\infty} e^{-\lambda t} \opC\opS(t)z_0 \, dt, \quad \re{\lambda}>\omega_0.$$
It is now clear that \ref{semi}. implies \ref{reso1}. while the converse follows from the injectivity of the Laplace transform.

The remaining equivalence is a consequence of the analytic continuation of the resolvent.
\end{proof}

\begin{lemma}[Proposition 3.1 of \cite{Fat}]\label{lem2}
Let $z_0 \in \dom{\opA}$ be fixed.
If the third point of Lemma \ref{lem1} holds, then $\opC\res{\lambda}{\opA}\proj z_0=0$ for every $\lambda \in \rho(\opA)$ and every $j$.
\end{lemma}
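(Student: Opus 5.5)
The plan is to express $\proj$ through the contour integral and commute the resolvents, so that $\opC\res{\lambda}{\opA}\proj z_0$ becomes a contour integral of the function $\xi \mapsto \opC\res{\xi}{\opA}z_0$, which vanishes identically by hypothesis.

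Fix $j$ and first take $\lambda \in \rho(\opA)$ lying outside the closed disc bounded by $C_j$. By the resolvent identity the resolvents commute, and $\res{\lambda}{\opA}$ is bounded, so it passes under the integral:
$$\res{\lambda}{\opA}\proj z_0=\frac{1}{2\pi i}\int_{C_j}\res{\xi}{\opA}\res{\lambda}{\opA}z_0\,d\xi .$$
Next I would apply the resolvent identity
$$\res{\xi}{\opA}\res{\lambda}{\opA}=\frac{\res{\xi}{\opA}-\res{\lambda}{\opA}}{\lambda-\xi}.$$
The term $\res{\lambda}{\opA}z_0/(\lambda-\xi)$ integrates to zero over $C_j$, because $\lambda$ lies outside $C_j$ and $\xi\mapsto 1/(\lambda-\xi)$ is holomorphic inside. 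Hence
$$\res{\lambda}{\opA}\proj z_0=\frac{1}{2\pi i}\int_{C_j}\frac{\res{\xi}{\opA}z_0}{\lambda-\xi}\,d\xi .$$

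Now apply $\opC$. The key technical point is that $\opC$ may be moved inside the integral. Since $z_0\in\dom{\opA}$, we have $\opA\res{\xi}{\opA}z_0=\res{\xi}{\opA}\opA z_0$. Therefore $\xi\mapsto\res{\xi}{\opA}z_0$ is continuous on $C_j$ into $\dom{\opA}$ equipped with the graph norm. The integral (a limit of Riemann sums) then converges in $\dom{\opA}$. Because $\opC$ is bounded from $\dom{\opA}$ with the graph norm (relative boundedness), it commutes with the integral. This gives
$$\opC\res{\lambda}{\opA}\proj z_0=\frac{1}{2\pi i}\int_{C_j}\frac{\opC\res{\xi}{\opA}z_0}{\lambda-\xi}\,d\xi=0,$$
since $\opC\res{\xi}{\opA}z_0=0$ for every $\xi\in C_j\subset\rho(\opA)$ by point 3 of Lemma \ref{lem1}.

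Finally, I would extend to all $\lambda\in\rho(\opA)$. The function $\lambda\mapsto\opC\res{\lambda}{\opA}\proj z_0$ is analytic on $\rho(\opA)$: writing $\res{\lambda}{\opA}w=\lambda\res{\lambda}{\opA}(\ldots)$ is not needed, because $\opC\res{\lambda}{\opA}$ is bounded and analytic in $\lambda$ as a map $H\to U$ (use $\opA\res{\lambda}{\opA}=\lambda\res{\lambda}{\opA}-I$). This function vanishes on the nonempty open set of $\lambda$ outside the disc. Since $\rho(\opA)$ is connected, as recalled above, analytic continuation gives vanishing on all of $\rho(\opA)$. Alternatively, one can simply shrink the circle $C_j$, which does not change $\proj$. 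I expect the main obstacle to be the justification of interchanging $\opC$ with the contour integral, which is handled by working in the graph norm as described.
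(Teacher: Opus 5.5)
Your proposal is correct and follows the paper's proof essentially step by step. Both use the resolvent identity under the contour integral, then Cauchy's theorem to discard the $\res{\lambda}{\opA}z_0$ term for $\lambda$ outside $C_j$, then pass $\opC$ through the integral by working in $\dom{\opA}$ with the graph norm, and finally extend by analytic continuation on the connected set $\rho(\opA)$. Your justifications of the interchange and of the analyticity of $\lambda\mapsto\opC\res{\lambda}{\opA}$ are more explicit than the paper's, and your alternative of shrinking $C_j$ (which leaves $\proj$ unchanged) is also valid.
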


\begin{proof}[Proof]
Let $\lambda \in \rho(\opA)$ lies outside the circle $C_j$.

The first resolvent equation $(\lambda-\xi)\res{\lambda}{\opA}\res{\xi}{\opA}=\res{\xi}{\opA}-\res{\lambda}{\opA}$ gives
$$
\begin{array}{rcl}
\res{\lambda}{\opA} \proj  z_0 &=&\ds \res{\lambda}{\opA} \frac{1}{2\pi i}\int_{C_j} \res{\xi}{\opA} z_0 \, d\xi \\
&=&\ds \frac{1}{2\pi i}\int_{C_j} \res{\lambda}{\opA}\res{\xi}{\opA} z_0 \, d\xi \\
&=&\ds -\frac{1}{2\pi i}\int_{C_j} \frac{\res{\xi}{\opA}-\res{\lambda}{\opA}}{\xi-\lambda} z_0 \, d\xi \\
&=&\ds -\frac{1}{2\pi i}\int_{C_j} \frac{\res{\xi}{\opA}}{\xi-\lambda} z_0 \, d\xi + \left(\frac{1}{2\pi i}\int_{C_j} \frac{1}{\xi-\lambda} \, d\xi\right) \res{\lambda}{\opA} z_0. \\
\end{array}
$$
Since $\lambda$ lies outside $C_j$, the second integrand is analytic in some disk enclosing $C_j$ and thus, by Cauchy's theorem, the second integral is zero.
This gives
$$
\res{\lambda}{\opA}  \proj z_0=
-\frac{1}{2\pi i}\int_{C_j} \frac{\res{\xi}{\opA}}{\xi-\lambda} z_0 \, d\xi.
$$
Once again this integral can be taken in $\dom{\opA}$.
Thus, applying $\opC$ we have
$$
\opC\res{\lambda}{\opA}  \proj z_0=
-\frac{1}{2\pi i}\int_{C_j} \frac{\opC\res{\xi}{\opA}}{\xi-\lambda}  z_0 \, d\xi.
$$
Using the assumption we obtain $\opC\res{\lambda}{\opA}  \proj  z_0=0$ for every such $\lambda$, and thus, by analytic continuation, for every $\lambda \in \rho(\opA)$.
\end{proof}

We are now ready to prove Theorem \ref{fat}.
Let us just introduce a last definition for commodity.
For a subspace $E \subset H$ invariant under $\opS(t)$, we say that the pair $(\opA,\opC)$ is observable in $E$ if
$$
\forall z_0 \in E, \quad
\bigg( \opC \opS(t)z_0=0 \mbox{ for a.e. } t \in (0,+\infty) \bigg)
\Longrightarrow z_0=0.
$$

\begin{proof}[Proof of Theorem \ref{fat}]
We will prove that the following properties are equivalent:
\begin{enumerate}
\item The pair $(\opA,\opC)$ is observable in every eigenspace of $\opA$.
\item The pair $(\opA,\opC)$ is observable in every root subspace of $\opA$.
\item The pair $(\opA,\opC)$ is observable in $\dom{\opA}$.
\end{enumerate}
It is adapted from Corollary 3.2 and Corollary 3.3 of \cite{Fat}.
We recall that the first condition is equivalent to: $\ker(s-\opA) \cap \ker \opC=\ens{0}$ for every $s \in \C$ (see Remark \ref{rem prem}).

The scheme of the proof is 1. $\Longrightarrow$ 2. $\Longrightarrow$ 3. $\Longrightarrow$ 1. (the last implication is obvious).

Assume that the pair $(\opA,\opC)$ is observable in every eigenspace.
If $z_0$ belongs to the root subspace of $\opA$ associated with $\mu_j$, then $\opS(t)z_0$ is a polynomial in $t$, up to a factor $e^{\mu_j t}$:
$$\opS(t)z_0= e^{\mu_j t} p_j(t),$$
with
$$p_j(t)=\sum_{\sigma=0}^{\tau_j-1} a_{j,\sigma}t^{\sigma}, \quad a_{j,\sigma}=\frac{(-1)^{\sigma}}{\sigma!} (\mu_j-\opA)^{\sigma} z_0.$$
This can be seen using the uniqueness of the solution to the evolution equation satisfied by $\opS(\cdot)z_0$.
Thus, the identity $\opC\opS(\cdot)z_0=0$ reads
$$\opC(\mu_j-\opA)^{\sigma} z_0=0, \quad 0 \leq \sigma \leq \tau_j-1.$$
In particular for $\sigma=\tau_j-1$ we have
$$\opC (\mu_j-\opA)^{\tau_j-1} z_0=0.$$
Now, recall that $z_0$ lies in the root subspace $\ker(\mu_j-\opA)^{\tau_j}$, so that
$$(\mu_j-\opA)^{\tau_j-1}z_0 \in \ker(\mu_j-\opA).$$
Thus, the assumption implies that
\begin{equation}\label{aucune idée}
(\mu_j-\opA)^{\tau_j-1}z_0=0.
\end{equation}
Taking this time $\sigma=\tau_j-2$ we have
$$\opC (\mu_j-\opA)^{\tau_j-2} z_0=0,$$
and from (\ref{aucune idée}) we know that
$$(\mu_j-\opA)^{\tau_j-2}z_0\in \ker(\mu_j-\opA),$$
so that the assumption gives
$$(\mu_j-\opA)^{\tau_j-2}z_0=0.$$
Iterating this process we obtain $z_0=0$.

Assume now that the pair $(\opA,\opC)$ is observable in every root subspace and let $z_0\in \dom{\opA}$ be such that $\opC\opS(t)z_0=0$.
Applying Lemma \ref{lem1} and Lemma \ref{lem2} we obtain that $\opC\opS(t) \proj z_0=0$ for a.e. $t \in (0,+\infty)$ and every $j$.
By assumption we deduce that $\proj z_0=0$ for every $j$, that is, $z_0 \in {\left(\image{\proj^*}\right)}^{\perp}$ for every $j$.
Since the system of root vectors of $\opA^*$ is assumed to be complete in $H$, we conclude that $z_0=0$.
\end{proof}

\section*{Acknowledgments}

The author would like to thank Assia Benabdallah and Franck Boyer for their
advices and constant support. The author also thanks Mehdi Badra and Tak\'eo
Takahashi for mentioning the paper \cite{Fat}.

\bibliographystyle{plain}
\bibliography{boundary-approximate-controllability}

\end{document}